\definecolor{codegray}{gray}{0.95}
\definecolor{keywordcolor}{rgb}{0.1,0.1,0.8}
\definecolor{commentcolor}{rgb}{0,0.5,0}
\newtheorem{theorem}{Theorem}[section]
\newtheorem{lemma}{Lemma}[section]
\newtheorem{definition}{Definition}[section]
\numberwithin{equation}{section}
\numberwithin{table}{section}
\numberwithin{figure}{section}
 \lstdefinestyle{sagemath}{
 	language=Python,
 	basicstyle=\footnotesize\ttfamily,
 	keywordstyle=\color{blue},
 	commentstyle=\color{gray}\itshape,
 	stringstyle=\color{red},
 	showstringspaces=false,
 	breaklines=true,
 	frame=single,
 	columns=fullflexible,
 	numbers=left,
 	numberstyle=\tiny\color{gray},
 	captionpos=b,
 	morekeywords={RealField, log, continued_fraction, sum, max}
 }
\title{On concatenations of two $k$-generalized Lucas numbers}
\author{Alex Behakanira Tumwesigye$^{1}$, Mahadi Ddamulira$^{1}$, Prosper Kaggwa$^{1,*}$}
\date{}                       
\begin{document}              
\maketitle                   
\abstract{\noindent For an integer \( k \geq 2 \), the sequence of \( k \)-generalized Lucas numbers is defined by the recurrence relation \( L_n^{(k)} = L_{n-1}^{(k)} + \cdots + L_{n-k}^{(k)} \) for all \( n \geq 2 \), with initial conditions \( L_0^{(k)} = 2 \), \( L_1^{(k)} = 1 \) for all \( k \geq 2 \), and \( L_{2-k}^{(k)} = \cdots = L_{-1}^{(k)} = 0 \) for \( k \geq 3 \). In this paper, we determine all \( k \)-generalized Lucas numbers that are concatenations of two terms of the same sequence and completely solve this problem for \( k \geq 3 \). Our approach combines nonzero lower bounds for linear forms in logarithms, reduction techniques based on the Baker--Davenport method and the LLL-algorithm, together with continued fraction analysis and computational verification using SageMath.}  
                           
	{\bf 2020 Mathematics Subject Classification}: 11B39, 11D61, 11D45, 11J86.
	                             
	\thanks{$ ^{*} $ Corresponding author}
\section{Introduction}
\subsection{Background}
Let $k \geq 2$ be an integer. The sequence of $k$-generalized Lucas numbers, also known as the $k$-Lucas number sequence, is defined by the recurrence relation,  
\[
L_n^{(k)} = L_{n-1}^{(k)} + \cdots + L_{n-k}^{(k)}, \quad \text{for all } n \geq 2,
\]
with initial conditions  
\[
L_0^{(k)} = 2,\quad L_1^{(k)} = 1 \quad \text{for all } k \geq 2, \text{ and } 
L_{-1}^{(k)} = L_{-2}^{(k)} = \cdots = L_{2 - k}^{(k)} = 0 \quad \text{for } k \geq 3.
\]
For $k = 2$, this is the well-known classical sequence of Lucas numbers, and in this case, we omit the superscript $^{(k)}$ in the notation. In table \ref{Table1}, for some small values of $k$, the first few values $L_n^{(k)}$ of are given. 
\begin{table}[H]\label{Table1}
	\centering
	\begin{tabular}{|c|c|p{10cm}|}
		\hline
		$k$ & Name & First non-zero terms \\
		\hline
		2 & Lucas & 2, 1, 3, 4, 7, 11, 18, 29, 47, 76, 123, 199, $\cdots$ \\
		\hline
		3 & 3-Lucas & 2, 1, 3, 6, 10, 19, 35, 64, 118, 217, 399, 734, $\cdots$ \\
		\hline
		4 & 4-Lucas & 2, 1, 3, 6, 12, 22, 43, 83, 160, 308, 594, 1145, $\cdots$ \\
		\hline
		5 & 5-Lucas & 2, 1, 3, 6, 12, 24, 46, 91, 179, 352, 692, 1360, $\cdots$ \\
		\hline
		6 & 6-Lucas & 2, 1, 3, 6, 12, 24, 48, 94, 187, 371, 736, 1460, $\cdots$ \\
		\hline
		7 & 7-Lucas & 2, 1, 3, 6, 12, 24, 48, 96, 190, 379, 755, 1504, $\cdots$ \\
		\hline
		8 & 8-Lucas & 2, 1, 3, 6, 12, 24, 48, 96, 192, 382, 763, 1523, $\cdots$ \\
		\hline
		9 & 9-Lucas & 2, 1, 3, 6, 12, 24, 48, 96, 192, 384, 766, 1531, $\cdots$ \\
		\hline
	\end{tabular}\caption{First non-zero $k$-Lucas numbers.}
\end{table}
\noindent In \cite{banks2005concatenations}, the authors determined all Fibonacci numbers that are concatenations of two Fibonacci numbers. In particular, they proved that $F_{10} = 55$ is the greatest Fibonacci number with this property. They also proved that for any binary recurrent sequence of integers $t_n$, there exists only finitely many terms of that sequence which can be written as concatenations of two or more terms of $t_n$ under the same mild hypothesis.
In \cite{altassan2022mixed}, the authors determined all Fibonacci numbers that are concatenations of a Fibonacci number and a Lucas number. Similarly, in \cite{altassan2024concatenations}, they determined all $k$-Fibonacci numbers that are concatenations of two terms of the same sequence. Particularly, they proved that the Diophantine equation \[F_n^{(k)}=F_m^{(k)}\cdot 10^d+F_\ell^{(k)},k\geq3, \] has solutions only in the cases \(F_7^{(3)}=24, F_8^{(3)}=44 \text{ and } F_{10}^{(8)}=16128\).
In this paper, we consider a similar but modified problem in which we take into account $k$-generalized Lucas numbers that are concatenations of two terms of the same sequence. We completely solve this problem for all $k \geq 3$. As a mathematical expression of this problem, we solve the Diophantine equation  
\begin{equation} \label{eq:main}
	L_n^{(k)} = L_m^{(k)} \cdot 10^d + L_p^{(k)}, \quad k \geq 3.
\end{equation}
in non-negative integers $m, n \geq 0$, and $p$, where $d$ is the number of digits of $L_p^{(k)}$. In \cite{erduvan2023lucas}, it was found out that for $k=2$, \(L_{6}^{(2)}=11= \overline{L_{2}^{(2)}L_{2}^{(2)}}, \text{ and } L_{9}^{(2)}=47= \overline{L_{4}^{(2)}L_{5}^{(2)}},\) are the only solutions to the Diophantine equation \eqref{eq:main}. We therefore state Theorem \ref{thm1.1l} for cases $k\geq 3$.
\subsection{Main Result}\label{sec:1.2l}
\begin{theorem}\label{thm1.1l} 
	Let $k\geq3$, the Diophantine equation \eqref{eq:main} has only the following solutions.
	\begin{align*} 
		& L_{4}^{(k)}=12= \overline{L_{1}^{(k)}L_{0}^{(k)}}, \text{ for } k\geq4,
		\text{ and } L_{5}^{(4)}=22= \overline{L_{0}^{(4)}L_{0}^{(4)}}. 
	\end{align*}
\end{theorem}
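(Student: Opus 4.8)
The plan is to treat \eqref{eq:main} by the standard Baker-type scheme: bound $n$ explicitly in terms of $k$ using linear forms in logarithms, reduce that bound, and finish by a finite verification, splitting according to whether $k$ is large or small. Throughout I will write $\alpha=\alpha(k)\in\bigl(2(1-2^{-k}),2\bigr)$ for the dominant root of the characteristic polynomial and use the Binet-type formula $L_n^{(k)}=g_k(\alpha)\alpha^{n}+e_n$ with $|e_n|<1/2$ and $g_k(\alpha)\approx\tfrac34$, together with two facts to be established in the preliminaries: the double inequality $\alpha^{n-1}\le L_n^{(k)}\le 2\alpha^{n}$, and the exact evaluation $L_n^{(k)}=3\cdot 2^{\,n-2}$ valid in the doubling range $2\le n\le k$ (alongside $L_0^{(k)}=2$, $L_1^{(k)}=1$). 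A preliminary normalisation shows $n>m$, and since $d$ is the number of digits of $L_p^{(k)}$ we have $10^{d-1}\le L_p^{(k)}<10^{d}$, whence $p<n$, $d<n$, and the quantity $(n-m)\log\alpha-d\log 10$ is small.

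First I would extract a relation between $m$, $n-m$ and $d$. Since $0<L_n^{(k)}-L_m^{(k)}10^{d}=L_p^{(k)}<10^{d}$,
\[
\left|\frac{L_n^{(k)}}{L_m^{(k)}\,10^{d}}-1\right|=\frac{L_p^{(k)}}{L_m^{(k)}\,10^{d}}<\frac{1}{L_m^{(k)}}\le\frac{1}{\alpha^{m-1}},
\]
and inserting the Binet formula (the factor $g_k(\alpha)$ cancels in the ratio $L_n^{(k)}/L_m^{(k)}$) gives $\bigl|\alpha^{\,n-m}10^{-d}-1\bigr|<c\,\alpha^{-m}$, so that $\Lambda_1=(n-m)\log\alpha-d\log 10$ satisfies $|\Lambda_1|<c'\alpha^{-m}$. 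Applying a Matveev-type lower bound to $\Lambda_1$ (two logarithms $\alpha,10$; field degree $k$; $h(\alpha)<(\log 2)/k$; and $B\asymp n$) and comparing the two estimates yields $m\ll k^{3}\log k\,\log n$.

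Next I would bound $n-m$ (equivalently $p$). Rewriting \eqref{eq:main} as $L_n^{(k)}-L_p^{(k)}=L_m^{(k)}10^{d}$ and inserting the Binet formula for $L_n^{(k)}$ and $L_p^{(k)}$ while now keeping the constant $g_k(\alpha)$ produces a second linear form, in the three logarithms $\alpha,10,g_k(\alpha)$, whose size is $\ll\alpha^{-(n-m)}$; a second Matveev estimate then gives $n-m\ll k^{3}\log k\,\log n$. Adding the two bounds produces $n\ll k^{3}\log k\,\log n$, which, solved for $n$, gives an effective bound $n<C(k)$ that is polynomial in $k$ up to logarithmic factors.

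To finish I would split on the size of $k$. For $k\ge K_1$ (large) I would exploit that $\alpha$ is exponentially close to $2$, so that $L_n^{(k)}\approx\tfrac34\,2^{n}$ while $n\ll 2^{k}$: any hypothetical solution with $n>k+1$ then yields a linear form in $\log 2$ and $\log 10$ \emph{alone}, bounded independently of $k$, and this contradicts $n>k+1\ge K_1$ once $K_1$ exceeds that universal bound. Hence for large $k$ every solution has $n\le k+1$, where the exact doubling values reduce \eqref{eq:main} to the elementary question of when one of $1,2,3\cdot 2^{j}$ is the decimal concatenation of two others; a $2$-adic and digit-counting argument (supported by continued fractions of $\log_{10}2$) then isolates precisely the family $L_4^{(k)}=12=\overline{L_1^{(k)}L_0^{(k)}}$ for $k\ge 4$. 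For the finitely many small $k<K_1$ the bound $C(k)$ is astronomically large but explicit, so I would reduce it by the Baker--Davenport lemma (using convergents of $\log 10/\log\alpha$) on $\Lambda_1$ and by an LLL lattice reduction on the three-term form, bringing $n$ into a short range searchable in SageMath, which returns the sporadic solution $L_5^{(4)}=22=\overline{L_0^{(4)}L_0^{(4)}}$. The main obstacle is the uniform-in-$k$ control: the Matveev constants carry a factor $k^{3}$, so the first stage only bounds $n$ as a function of $k$, and genuine care is required both to tame this $k$-dependence through the passage $\alpha\to 2$ and the exact doubling formula (so that the large-$k$ case collapses to the powers-of-two analysis) and to carry out the reductions for each individual small $k$ without the linear forms degenerating.
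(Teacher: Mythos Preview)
Your overall scheme matches the paper's: two applications of Matveev to bound $n$ polynomially in $k$, then a split into small $k$ (continued-fraction and LLL reduction followed by a finite search) versus large $k$ (pass from $\alpha$ to $2$ and work with linear forms over $\mathbb{Q}$). Two points of execution, however, are off and would not go through as you have written them.

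Your second linear form is mis-specified. Inserting the Binet formula into $L_n^{(k)}-L_p^{(k)}=L_m^{(k)}10^{d}$ yields $g_k(\alpha)\alpha^{n}(1-\alpha^{p-n})=L_m^{(k)}10^{d}+O(1)$, so the third algebraic number must absorb both $L_m^{(k)}$ and $(1-\alpha^{p-n})$; it is not $g_k(\alpha)$ alone. Its logarithmic height then carries $m$ and $|n-p|$, and the error after dividing is $O(\alpha^{-n})$, not $\alpha^{-(n-m)}$. Matveev therefore bounds $n$ directly (once the first-stage bound on $m\approx n-p$ is fed in through the height of $\eta_3$), and the outcome is of size $k^{7}(\log k)^{5}$, not $k^{3}\log k$. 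The paper carries this out with $\eta_3=L_m^{(k)}/\bigl(f_k(\alpha)(2\alpha-1)(1-\alpha^{p-n})\bigr)$ and the bound $18/\alpha^{\,n-1}$; your ``adding the two bounds'' shortcut is not available.

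Second, the doubling range $2\le n\le k$ deserves to be isolated for \emph{every} $k$, not only large $k$. With $L_n^{(k)}=3\cdot 2^{\,n-2}$ the equation becomes $2^{\,n-2}=2^{\,m-2}10^{d}+2^{\,p-2}$, which factors as $2^{\,p-m}(2^{\,n-p}-1)=2^{d}5^{d}$ (after disposing of the case $p\le m$ by parity), and then Bang's theorem on primitive prime divisors of $2^{t}-1$ finishes it with no analytic input. This is both cleaner than your $2$-adic and continued-fraction sketch and removes that subrange from the small-$k$ reductions altogether. (Note also that the exact formula $3\cdot 2^{\,n-2}$ stops at $n=k$, not $n=k+1$.)
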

\noindent Effective methods for tackling Diophantine equations similar to \eqref{eq:main} include approaches like nonzero lower bounds for linear forms in logarithms of algebraic numbers, as developed by Matveev \cite{Matveev2000} and the reduction algorithm proposed by Dujella and Peth\H{o} \cite{Dujella1998}, which is a variation of Baker and Davenport. For our analysis, we employed these techniques in conjunction with specific properties of \( k \)-Lucas numbers and the use of continued fractions. All computations were conducted using SageMath, with precision up to 1000 digits. The next section presents details of these methods used.
\section{Methods}
\subsection{Some properties of $k$-generalized Lucas numbers}
It is well established for $k$-Lucas numbers that  
\begin{align}\label{eq2.2}
	L_n^{(k)} = 3 \cdot 2^{n-2},\qquad \text{for all}\qquad 2 \leq n \leq k.
\end{align}
These numbers generate a linearly recurrent sequence with the characteristic polynomial  
\[
\Psi_k(x) = x^k - x^{k-1} - \cdots - x - 1,
\]
which is irreducible over $\mathbb{Q}[x]$. The polynomial $\Psi_k(x)$ has a unique real root $\alpha(k) > 1$, with all other roots lying outside the unit circle, as noted in \cite{miles1960generalized}. The root $\alpha(k) := \alpha$ satisfies the bound  
\begin{align}\label{3.1}
	2(1 - 2^{-k}) < \alpha < 2 \quad \text{for all} \quad k \geq 2,
\end{align}
as established in \cite{wolfram}. Similar to the classical case where $k=2$, it was proved in \cite{bravo2014repdigits} that  
\begin{align}\label{2.2}
	\alpha^{n-1} \leq L_n^{(k)} \leq 2\alpha^n, \quad \text{for all } n \geq 1,\quad k \geq 2.
\end{align}
For any $k \geq 2$, define the function  
\begin{equation}\label{fk}
	f_k(x) := \dfrac{x - 1}{2 + (k+1)(x - 2)}.
\end{equation}
From the bound in \eqref{3.1}, it follows that  
\begin{equation}\label{fkprop}
	\frac{1}{2} = f_k(2) < f_k(\alpha) < f_k(2(1 - 2^k)) \leq \frac{3}{4},
\end{equation}
for all $k \geq 3$ since $f_k(x)$ is reducing for all $x>0$. It is straightforward to check that this inequality is also valid for $k = 2$. Moreover, one can verify that for all $2 \leq i \leq k$, where $\alpha_i$ represents the remaining roots of $\Psi_k(x)$, the condition $|f_k(\alpha_i)| < 1$ holds, see \cite{bravo2014repdigits}.
The following lemma plays a crucial role in applying Baker's theory.  
\begin{lemma}[Lemma 2, \cite{ruiz2016multiplicative}]
	For all $k \geq 2$, the number $f_k(\alpha)$ is not an algebraic integer.
\end{lemma}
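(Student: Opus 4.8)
The plan is to compute the field norm of $f_k(\alpha)$ over $\mathbb{Q}$ and show it is a rational number whose absolute value lies strictly between $0$ and $1$; since every algebraic integer has a rational-integer norm, this forces $f_k(\alpha)$ not to be an algebraic integer.

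First I would record that $\alpha$ is an algebraic integer of degree $k$, since $\Psi_k$ is monic, irreducible, and integer-coefficiented, and that $f_k(\alpha)=\dfrac{\alpha-1}{(k+1)\alpha-2k}$ lies in the field $K:=\mathbb{Q}(\alpha)$. Writing $\alpha=\alpha_1,\dots,\alpha_k$ for the conjugates (the roots of $\Psi_k$, all distinct by separability), the embeddings of $K$ send $\alpha\mapsto\alpha_i$ and fix $\mathbb{Q}$, so, as $f_k$ has rational coefficients,
\[
N_{K/\mathbb{Q}}\big(f_k(\alpha)\big)=\prod_{i=1}^{k}f_k(\alpha_i)=\frac{\prod_{i=1}^{k}(\alpha_i-1)}{\prod_{i=1}^{k}\big((k+1)\alpha_i-2k\big)}.
\]
The denominator cannot vanish: a value $\alpha_i=\tfrac{2k}{k+1}$ would be a rational root of the monic integer polynomial $\Psi_k$, forcing $\tfrac{2k}{k+1}\in\{\pm1\}$ by the rational root theorem, which is impossible for $k\geq2$.

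Next I would evaluate both products through $\Psi_k(x)=\prod_{i=1}^{k}(x-\alpha_i)$. The numerator is $\prod_i(\alpha_i-1)=(-1)^k\Psi_k(1)=(-1)^k(1-k)$, while factoring out $(k+1)$ turns the denominator into $(k+1)^k(-1)^k\Psi_k\!\big(\tfrac{2k}{k+1}\big)$. To evaluate $\Psi_k$ at the rational point $\beta:=\tfrac{2k}{k+1}$ I would use the identity $(x-1)\Psi_k(x)=x^{k+1}-2x^k+1$, giving $\Psi_k(\beta)=\dfrac{\beta^k(\beta-2)+1}{\beta-1}$; substituting $\beta-1=\tfrac{k-1}{k+1}$ and $\beta-2=\tfrac{-2}{k+1}$ yields, after simplification,
\[
N_{K/\mathbb{Q}}\big(f_k(\alpha)\big)=\frac{-(k-1)^2}{(k+1)^{k+1}-2(2k)^k}.
\]

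Finally, the conclusion follows from a size estimate, which I expect to be the only real obstacle: one must show $0<(k-1)^2<\big|(k+1)^{k+1}-2(2k)^k\big|$ for all $k\geq2$. Concretely, I would first check the denominator is negative, i.e. $2(2k)^k>(k+1)^{k+1}$, which reduces to $(k+1)\big(\tfrac{k+1}{2k}\big)^k<2$ and follows from $\big(1+\tfrac1k\big)^k<e$ together with direct checks at $k=2,3$; then the same bound gives $2(2k)^k-(k+1)^{k+1}$ vastly larger than $(k-1)^2$ for $k\geq4$, with $k=2,3$ handled by hand. Granting this, the norm is a nonzero rational of absolute value strictly less than $1$, hence not a rational integer, and therefore $f_k(\alpha)$ cannot be an algebraic integer.
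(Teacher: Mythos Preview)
The paper does not supply its own proof of this lemma; it merely quotes the statement from G\'omez Ruiz and Luca \cite{ruiz2016multiplicative} and moves on. So there is nothing in the present paper to compare your argument against.

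That said, your proposal is a correct and self-contained proof. The norm computation is right: with $\beta=\tfrac{2k}{k+1}$ and the factorisation $(x-1)\Psi_k(x)=x^{k+1}-2x^k+1$ one indeed obtains
\[
N_{K/\mathbb{Q}}\bigl(f_k(\alpha)\bigr)=\frac{-(k-1)^2}{(k+1)^{k+1}-2(2k)^k},
\]
and the size estimate goes through as you outline (writing $\bigl(\tfrac{k+1}{2k}\bigr)^k=2^{-k}(1+1/k)^k<e\,2^{-k}$ gives $(k+1)^{k+1}<2(2k)^k$ for $k\ge 4$, with $k=2,3$ checked directly; the resulting gap $2(2k)^k-(k+1)^{k+1}$ then dominates $(k-1)^2$ for all $k\ge 2$). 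Since the norm of an algebraic integer must lie in $\mathbb{Z}$, a nonzero rational norm of absolute value strictly below $1$ finishes the argument. This is essentially the same strategy used in the cited source, so your approach is in line with the original, even though the paper under review does not reproduce it.
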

\noindent Additionally, it was proven in \cite{bravo2014repdigits} that  
\begin{align}\label{3.5}
	L_n^{(k)} = \sum_{i=1}^k (2\alpha_i - 1) f_k(\alpha_i) \alpha_i^{n-1}, \quad \text{and} \quad
	\left|L_n^{(k)} - f_k(\alpha)(2\alpha - 1) \alpha^{n-1} \right| < \frac{3}{2},
\end{align}
for all $k \geq 2$ and $n \geq {2-k}$. This leads to the expression  
\begin{align}\label{3.6}
	L_n^{(k)} = f_k(\alpha)(2\alpha - 1) \alpha^{n-1} + e_k(n), \quad \text{where} \quad |e_k(n)| < 1.5.
\end{align}
The left-hand expression in \eqref{3.5} is commonly referred to as the Binet-like formula for $L_n^{(k)}$. Furthermore, the inequality on the right in \eqref{3.6} indicates that the contribution of roots inside the unit circle to $L_n^{(k)}$ is negligible. A better estimate than \eqref{3.5} appears in Section 3.3 of \cite{batte2024largest}, but with a more restricted range of $n$ in terms of $k.$ It states that \begin{align}\label{b}
	\left| f_k(\alpha)(2\alpha-1)\alpha^{n-1}-3 \cdot2^{n-2}\right| <  3\cdot 2^{n-2}\cdot \frac{36}{2^{k/2}},\qquad {\text{\rm provided}}\qquad n<2^{k/2}.
\end{align}
\noindent Lastly, the following result from \cite{bang1886taltheoretiske} will be used in our proof.
\begin{lemma} [\cite{bang1886taltheoretiske}] \label{bangzig}
	For any positive integer $n > 1$ with $n \neq 6$, the number $2^n - 1$ has a prime divisor that does not divide $2^k - 1$ for any $k < n$.
\end{lemma}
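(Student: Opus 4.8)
The plan is to prove the sharper statement that $2^n-1$ has a \emph{primitive} prime divisor — a prime $p$ with $p \mid 2^n-1$ but $p \nmid 2^k-1$ for all $1 \le k < n$, equivalently $\operatorname{ord}_p(2) = n$ — for every $n > 1$ with $n \ne 6$. The engine is the cyclotomic factorization $2^n - 1 = \prod_{d \mid n} \Phi_d(2)$, together with the remark that a prime $p$ is a primitive divisor of $2^n-1$ precisely when $p \mid \Phi_n(2)$ and $\operatorname{ord}_p(2) = n$. Thus it is enough to exhibit a prime factor of $\Phi_n(2)$ of order exactly $n$.

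First I would establish the structural lemma governing the non-primitive prime factors of $\Phi_n(2)$. Suppose $p \mid \Phi_n(2)$ and $d := \operatorname{ord}_p(2) < n$. Then $d \mid n$, and by the lifting-the-exponent identity $v_p(2^m-1) = v_p(2^d-1) + v_p(m)$, valid for odd $p$ and $d \mid m$, one gets $n = d\,p^{\,j}$ with $j \ge 1$ and $v_p(\Phi_n(2)) = v_p(2^n-1) - v_p(2^{n/p}-1) = 1$. Since $d \mid p-1$, every prime factor of $d$ is smaller than $p$, which forces $p$ to be the largest prime factor $P$ of $n$. Hence $\Phi_n(2)$ has at most one non-primitive prime factor, namely $P$, occurring to the first power only. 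Because $\Phi_n(2) \ge 3 > 1$ for $n \ge 2$, a prime factor always exists, so the sole way for $2^n-1$ to lack a primitive prime divisor is the equality $\Phi_n(2) = P$.

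It then remains to rule out $\Phi_n(2) = P$. As $P \le n$, it suffices to prove the lower bound $\Phi_n(2) > n$ for all sufficiently large $n$. I would read this off $\Phi_n(2) = \prod_{\zeta}|2-\zeta|$ over the primitive $n$-th roots of unity $\zeta = e^{i\theta}$: each factor with $\operatorname{Re}\zeta \le 0$ satisfies $|2-\zeta|^2 = 5 - 4\cos\theta \ge 5$, and for large $n$ the equidistribution of the $\phi(n)$ primitive roots on the unit circle places a positive proportion of them in the left half-plane, yielding a bound of the shape $\Phi_n(2) \ge 5^{\,c\,\phi(n)}$ for an absolute constant $c > 0$. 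The same growth follows from the Möbius form $\Phi_n(2) = \prod_{d\mid n}(2^d-1)^{\mu(n/d)}$, in which the leading factor $2^n-1$ dominates the correcting factors of total size $O(2^{n/2})$. Either way $\Phi_n(2)$ grows faster than any fixed power of $n$, so $\Phi_n(2) > n$ holds once $n$ passes a small explicit threshold $N_0$.

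Finally, the direct verification for $2 \le n \le N_0$ is where the exceptional case surfaces: one finds $\Phi_n(2) > n \ge P$ throughout this range except at $n = 6$, where $\Phi_6(2) = 3$ equals the largest prime factor of $6$, reflecting that $2^6 - 1 = 63 = 3^2 \cdot 7$ admits no primitive divisor. Combined with the structural lemma, this produces a primitive prime divisor for every $n > 1$ with $n \ne 6$, which is exactly the assertion. I expect the main obstacle to be the uniform lower bound $\Phi_n(2) > n$: the geometric estimate degrades for small and for highly composite $n$ (for instance the left half-plane argument gives nothing at $n = 6$, and the crude Möbius bound weakens when $\sum_{p\mid n} 1/p$ is large), so the delicate points are fixing the constant $c$, pinning down the threshold $N_0$, and making the ensuing finite check genuinely exhaustive.
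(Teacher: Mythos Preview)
The paper does not prove this lemma at all: it is quoted as a classical result with a citation to Bang~(1886), so there is no ``paper's own proof'' to compare against. Your outline is the standard cyclotomic-polynomial proof of Bang's theorem and is essentially correct. The structural lemma (any non-primitive prime $p\mid\Phi_n(2)$ must be the largest prime factor of $n$ and occur only to the first power) is right, and the reduction to $\Phi_n(2)>n$ is the usual move. One small point: your LTE identity should read $v_p(2^m-1)=v_p(2^d-1)+v_p(m/d)$, which coincides with $v_p(m)$ only because $\gcd(d,p)=1$; you use this implicitly. For the lower bound, the equidistribution sketch is vague at small $n$; the cleaner route is the M\"obius identity $\Phi_n(2)/2^{\phi(n)}=\prod_{d\mid n}(1-2^{-d})^{\mu(n/d)}$, which gives $\Phi_n(2)\ge 2^{\phi(n)}e^{-2}$ uniformly and hence $\Phi_n(2)>n$ once $\phi(n)$ exceeds roughly $\log_2 n + 3$, leaving only a short explicit finite check in which $n=6$ emerges as the sole exception.
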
 	
\subsection{Linear forms in logarithms}
\begin{definition}
Let $\theta$ be an algebraic number, and consider its minimal polynomial over $\mathbb{Z}$, given by  
\[
	u_0x^d+u_1x^{d-1}+\cdots+u_t=u_0\prod_{i=1}^{d}(x-\theta^{(i)}),
	\] where the coefficients $u_i$ are relatively prime integers with $u_0 > 0$, and the numbers $\theta^{(i)}$ represent the conjugates of $\theta$. The logarithmic height of $\theta$ is defined as  
	\[
	h(\theta) = \frac{1}{d} \left( \log u_0 + \sum_{i=1}^{d} \log \left(\max\{|\theta^{(i)}|,1\}\right) \right).
	\]
\end{definition}
\noindent In particular, for a rational number $r/s$, where $r$ and $s > 0$ are coprime integers, the logarithmic height is given by $h(r/s) = \log \max \{|r|, s\}$. The following properties are useful and shall be applied without reference:
\begin{enumerate}[$(a)$]
	\item $h(\theta_1 \pm \theta_2) \leq h(\theta_1) + h(\theta_2) + \log 2$.
	\item $h(\theta_1 \theta_2^{\pm 1}) \leq h(\theta_1) + h(\theta_2)$.
	\item $h(\theta^s) = |s| h(\theta),$ for any integer $s \in \mathbb{Z}$.
\end{enumerate}
It was computed in Section 3, equation (12) in \cite{bravo2013conjecture} that $h(f_k(\alpha))<3\log k$ for all $k\geq 2.$
Theorem \ref{thm1} is a very important theorem proved in \cite{Matveev2000} that is fundamental in obtaining bounds on variables in the Diophantine equation \eqref{eq:main}.
First, the general setting in which it holds is introduced. Let $\mathbb{K}$ be a number field of degree $D$, let $\alpha_1, \ldots, \alpha_t$ be non-zero elements of $\mathbb{K}$ and $b_1, \ldots, b_n$ be rational integers. Set
\[
B = \max\{|b_1|, \ldots, |b_t|\},
\]
and
\[
\Gamma = \eta_1^{b_1} \cdots \eta_t^{b_t} - 1.
\]
Let $h$ denote the absolute logarithmic height and let $A_1, \ldots, A_t$ be real numbers with
\[
A_j \geq h'(\eta_j) := \max\{D h(\eta_j), |\log \eta_j|, 0.16\}, \quad 1 \leq j \leq t.
\]
We call $h'$ the modified height (with respect to the field $\mathbb{K}$). With this notation, the main result in \cite{Matveev2000} implies the following estimate.
\begin{theorem}[Theorem 9.4 in \cite{Bugeaud}]\label{thm1}
Assume that $\Lambda$ is non-zero and $\mathbb{K}$ is real, we have
	\[
	\log |\Gamma| > -1.4 \cdot 30^{t+3} t^{4.5} D^2 (1 + \log D)(1 + \log B) A_1 \cdots A_t.
	\]	
\end{theorem}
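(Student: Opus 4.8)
The bound stated here is the celebrated explicit lower estimate for a nonzero linear form in logarithms of algebraic numbers; in the present formulation it is quoted from \cite{Bugeaud} as a convenient packaging of Matveev's theorem \cite{Matveev2000}, itself the culmination of Baker's method. I would therefore not attempt to reprove it from scratch, but the strategy underlying every result of this type is the following. Writing $\Lambda = b_1\log\eta_1 + \cdots + b_t\log\eta_t$, so that $\Gamma = e^{\Lambda}-1$ and a lower bound for $\log|\Gamma|$ is, up to an elementary estimate valid when $\Lambda$ is small, a lower bound for $|\Lambda|$, one argues by contradiction: suppose $|\Lambda|$ were smaller than the claimed quantity, and then derive that some nonzero algebraic number has absolute value below the Liouville-type bound forced by its degree and height.

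First I would encode the hypothetical smallness of $\Lambda$ into an auxiliary analytic object. In Baker's classical form this is an auxiliary function $\Phi$ assembled from products $\eta_1^{\mu_1}\cdots\eta_t^{\mu_t}$ with unknown integer coefficients, chosen via Siegel's lemma (a pigeonhole argument) so that $\Phi$ and many of its derivatives vanish on a block of integer points while the coefficients remain of controlled height. Matveev's sharper route replaces this by an \emph{interpolation determinant} whose entries are such monomials evaluated on a grid of points; the near-linear-dependence expressed by a tiny $\Lambda$ makes this determinant analytically very small.

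Next I would extrapolate: the maximum-modulus principle together with the assumed smallness of $\Lambda$ propagates the vanishing (or the smallness of the determinant) to a far larger range, and to higher order, than was originally imposed. Confronting this analytic smallness with an arithmetic lower bound — a Hadamard/Liouville estimate bounding the determinant away from zero in terms of the heights $h(\eta_j)$, the degree $D$, and the size $B$ — yields the contradiction, and unwinding the constants produces the explicit factors $30^{t+3}$, $t^{4.5}$, $D^{2}(1+\log D)$ and $(1+\log B)A_1\cdots A_t$ recorded in the statement; the hypotheses that $\Lambda\neq 0$ and that $\mathbb{K}$ is real are exactly what make the archimedean estimate clean.

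The genuinely hard step, in every incarnation of this theorem, is the \emph{zero (multiplicity) estimate} guaranteeing that the auxiliary function, or the interpolation determinant, does not vanish identically, so that the contradiction is a real one; in the general case this is Philippon-type input. I would expect that, together with the delicate quantitative bookkeeping that reduces the dependence on the number $t$ of logarithms from exponential to the polynomial factor $t^{4.5}$, to be the main obstacle, and these are precisely what Matveev's work supplies. For the purposes of this paper the theorem is used as a black box when bounding the variables $m,n,p,d$ in \eqref{eq:main}.
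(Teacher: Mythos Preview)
Your assessment is correct and matches the paper's treatment: the theorem is quoted as Theorem~9.4 of \cite{Bugeaud} (a convenient form of Matveev's result \cite{Matveev2000}) and is used entirely as a black box, with no proof given or expected. Your high-level sketch of the Baker--Matveev method is accurate background, but for the purposes of this paper nothing beyond the citation is needed.
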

\subsection{Reduction methods}
\subsubsection{Lattice-based tools}			
\noindent The bounds obtained from applying Theorem \ref{thm1} are often too large for practical computations. A reduction method based on the LLL-algorithm will be applied to refine the bounds obtained from applying Theorem \ref{thm1}. First, definitions and explanations are provided to facilitate an understanding of the LLL-algorithm.
\begin{definition}[Lattice]
Let $k$ be a positive integer. A subset $L$ of the real vector space $\mathbb{R}^k$ is called a \emph{lattice} if there exists a basis $\{b_1, \ldots, b_k\}$ of $\mathbb{R}^k$ such that
\[
	L = \left\{\sum_{i=1}^k r_i b_i \;\middle|\; r_i \in \mathbb{Z} \right\}.
	\]
\end{definition}
\noindent The vectors $b_1, \ldots, b_k$ are said to form a basis for $L$, and $k$ is termed the rank of $L$. The \emph{determinant} $\det(L)$ is defined as
\[
\det(L) = |\det(b_1, \ldots, b_k)|,
\]
where the $b_i$ are represented as column vectors. This value is independent of the choice of basis, (see~\cite{cassels2012geometry}, Section 1.2).
For linearly independent vectors $b_1, \ldots, b_k$ in $\mathbb{R}^k$, we employ the Gram-Schmidt orthogonalization process to define vectors $b^*_i$ and coefficients $\mu_{i,j}$ via
\[
b^*_i = b_i - \sum_{j=1}^{i-1} \mu_{i,j} b^*_j, \quad \mu_{i,j} = \frac{\langle b_i, b^*_j \rangle}{\langle b^*_j, b^*_j \rangle},
\]
where $\langle \cdot, \cdot \rangle$ is the standard inner product on $\mathbb{R}^k$.
\begin{definition}
	A basis $b_1, \ldots, b_n$ for the lattice $L$ is called \emph{reduced} if
	\[
	|\mu_{i,j}| \leq \frac{1}{2} \text{ for } 1 \leq j < i \leq n,
	\]
	\[
	\|b^*_i + \mu_{i,i-1} b^*_{i-1} \|^2 \geq \frac{3}{4} \|b^*_{i-1} \|^2 \text{ for } 1 < i \leq n,
	\]
	where $\| \cdot \|$ is the Euclidean norm. The constant $\frac{3}{4}$ can be replaced by any fixed $y \in (1/4, 1)$, (see~\cite{lenstra1982factoring}, section 1).
\end{definition}
For a $k$-dimensional lattice $L \subset \mathbb{R}^k$ with reduced basis $b_1, \ldots, b_k$, let $B$ be the matrix with columns $b_1, \ldots, b_k$. Define
\[
\ell(L, y) =
\begin{cases}
	\min_{x \in L} \|x - y\| & \text{if } y \notin L, \\
	\min_{0 \neq x \in L} \|x\| & \text{if } y \in L,
\end{cases}
\]
where $\| \cdot \|$ is the Euclidean norm. The LLL algorithm efficiently computes a lower bound $\ell(L, y) \geq c_1$ in polynomial time, (see~\cite{smart1998algorithmic}., Section V.4)
\begin{lemma}[\cite{smart1998algorithmic}\label{red}, Section V.4] Let $y \in \mathbb{R}^k$ and $z = B^{-1}y$ with $z = (z_1, \ldots, z_k)^T$.
	\begin{itemize}
		\item[(i)] If $y \notin L$, let $i_0$ be the largest index with $z_{i_0} \neq 0$ and set $\lambda := \{z_{i_0}\}$, where $\{ \cdot \}$ denotes the fractional part.
		\item[(ii)] If $y \in L$, set $\lambda := 1$.
	\end{itemize}
	Then,
	\[
	c_1 := \max_{1 \leq j \leq k} \left\{ \frac{\|b_1\|}{\|b^*_j\|} \right\}, \quad \delta := \lambda \frac{\|b_1\|}{c_1}.
	\]
\end{lemma}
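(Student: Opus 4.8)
The statement as displayed only introduces the constants $c_1$ and $\delta$; the conclusion to be proved is the lower bound $\ell(L,y)\ge\delta$, i.e.\ that $\delta$ guards the distance from $y$ to $L$ (or the length of a shortest nonzero vector when $y\in L$). The plan is to reduce everything to one observation: the Gram--Schmidt vectors $b_1^*,\dots,b_k^*$ are pairwise orthogonal, so for any $v=\sum_{j=1}^k c_j b_j^*$ one has $\|v\|^2=\sum_{j=1}^k c_j^2\|b_j^*\|^2\ge c_{j_0}^2\|b_{j_0}^*\|^2$ for \emph{any single} index $j_0$. Hence it suffices, for each candidate vector, to exhibit one Gram--Schmidt coefficient of modulus at least $\lambda$; together with $\|b_{j_0}^*\|\ge\min_j\|b_j^*\|$ this gives $\|v\|\ge\lambda\min_j\|b_j^*\|$. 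I would first record the identity $c_1=\|b_1\|/\min_{1\le j\le k}\|b_j^*\|$ (since $b_1^*=b_1$ and the max is taken at the smallest $\|b_j^*\|$), so that $\delta=\lambda\|b_1\|/c_1=\lambda\min_j\|b_j^*\|$, which is the form I would actually work with.

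First I would dispose of case (ii), $y\in L$, where $z=B^{-1}y\in\mathbb{Z}^k$ and $\ell(L,y)$ is the length of a shortest nonzero $v=\sum_{i=1}^k p_i b_i$ with $p_i\in\mathbb{Z}$ not all zero. Letting $i_0$ be the largest index with $p_{i_0}\ne0$ and expanding $v$ in the Gram--Schmidt basis, the coefficient of $b_{i_0}^*$ is exactly $p_{i_0}$, because every $b_i$ with $i>i_0$ (which alone could contribute a $\mu_{i,i_0}$ term) has coefficient $p_i=0$. Thus $\|v\|\ge|p_{i_0}|\,\|b_{i_0}^*\|\ge\min_j\|b_j^*\|$, which is $\delta$ with $\lambda=1$.

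The substance is case (i), $y\notin L$. For an arbitrary $x=\sum_i p_i b_i\in L$ I would expand $x-y=\sum_j c_j b_j^*$ with $c_j=(p_j-z_j)+\sum_{i>j}(p_i-z_i)\mu_{i,j}$, take $i_0$ to be the largest index at which $z_{i_0}$ is non-integral (so $z_i\in\mathbb{Z}$ for $i>i_0$), and let $\lambda$ be the distance from $z_{i_0}$ to the nearest integer. The decisive move is to read off the coefficient at the \emph{largest} index where the value is pinned down, so that the contaminating sum $\sum_{i>j}(\cdots)\mu_{i,j}$ collapses: if $p_i=z_i$ for every $i>i_0$, then $c_{i_0}=p_{i_0}-z_{i_0}$, whose modulus is at least $\lambda$ since $z_{i_0}\notin\mathbb{Z}$; otherwise, taking $j_1$ to be the largest index $>i_0$ with $p_{j_1}\ne z_{j_1}$, the higher contributions again vanish and $c_{j_1}=p_{j_1}-z_{j_1}$ is a nonzero integer, so $|c_{j_1}|\ge1\ge\lambda$. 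In either case some coefficient has modulus $\ge\lambda$, whence $\|x-y\|\ge\lambda\min_j\|b_j^*\|=\delta$; as $x$ was arbitrary, $\ell(L,y)\ge\delta$.

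I expect the main obstacle to be precisely the cross terms $\mu_{i,j}$, which are non-integral and can reach $\tfrac12$: projecting onto a fixed $b_{i_0}^*$ does not produce an integer coefficient, and the offending $\mu$-contributions can be chosen to shrink it. The whole delicacy is resolved by the index selection above, always choosing the top index at which the coordinate is forced, so that every contaminating $\mu$-term multiplies a vanishing coefficient and the surviving coefficient is either a nonzero integer or an integer minus the non-integral $z_{i_0}$. (The displayed statement phrases this through the fractional part $\{z_{i_0}\}$ and the largest index with $z_{i_0}\ne0$; for the argument to bite, $\lambda$ should be read as the distance of $z_{i_0}$ to the nearest integer, which is what the estimate actually controls and what the reduction step requires.)
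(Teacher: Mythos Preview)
The paper does not supply its own proof of this lemma: it is quoted verbatim as a tool from \cite{smart1998algorithmic}, Section~V.4, and no argument is given. There is therefore nothing in the paper to compare your proposal against.

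That said, your write-up is the standard argument one finds in Smart's text and in de Weger's thesis: reduce to the orthogonal Gram--Schmidt expansion, and for the relevant vector locate the top index whose Gram--Schmidt coefficient is controlled (a nonzero integer, or an integer minus the non-integral $z_{i_0}$), so that the $\mu_{i,j}$ cross terms above it vanish. Your observation that $c_1=\|b_1\|/\min_j\|b_j^*\|$ and hence $\delta=\lambda\min_j\|b_j^*\|$ is the right way to unpack the displayed definitions.

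You are also right to flag the two imprecisions in the statement as printed here. First, for the argument to go through in case~(i), $i_0$ must be the largest index with $z_{i_0}\notin\mathbb{Z}$, not merely $z_{i_0}\ne 0$; otherwise the top coefficient $p_{i_0}-z_{i_0}$ can be zero and nothing is gained. Second, the quantity that the estimate actually controls is the distance from $z_{i_0}$ to the nearest integer, $\min(\{z_{i_0}\},1-\{z_{i_0}\})$, not the fractional part $\{z_{i_0}\}$ itself; with the bare fractional part the bound $|p_{i_0}-z_{i_0}|\ge\lambda$ fails when $\{z_{i_0}\}>1/2$. In the paper's application $y=(0,\dots,0,-\lfloor C\eta_0\rfloor)$ is an integer vector and $z=B^{-1}y$ typically has a single non-integral coordinate, so these issues are harmless in practice, but your reading is the correct one for a general proof.
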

\noindent In applications, one encounters real numbers $\eta_0, \eta_1, \ldots, \eta_k$, linearly independent over $\mathbb{Q}$, and constants $c_3, c_4 > 0$ such that
\[
|\eta_0 + x_1 \eta_1 + \cdots + x_k \eta_k| \leq c_3 \exp(-c_4 H),
\]
where $|x_i| \leq X_i$ for given bounds $X_i$ ($1 \leq i \leq k$). Let $X_0 := \max_{1 \leq i \leq k} \{X_i\}$. Following~\cite{deweger1987diophantine}, the inequality is approximated using a lattice generated by the columns of
$$ \mathcal{A}=\begin{pmatrix}
	1 & 0 &\ldots& 0 & 0 \\
	0 & 1 &\ldots& 0 & 0 \\
	\vdots & \vdots &\vdots& \vdots & \vdots \\
	0 & 0 &\ldots& 1 & 0 \\
	\lfloor C\eta_1\rfloor & \lfloor C\eta_2\rfloor&\ldots & \lfloor C\eta_{k-1}\rfloor& \lfloor C\eta_{k} \rfloor
\end{pmatrix} ,$$
where $C$ is a large constant (typically $\approx X_0^k$). For an LLL-reduced basis $b_1, \ldots, b_k$ of $L$ and $y = (0, \ldots, -\lfloor C \eta_0 \rfloor)$, Lemma \ref{red} yields a lower bound $\ell(L, y) \geq c_1$.
\begin{lemma}[{Lemma VI.1 in \cite{smart1998algorithmic}}]\label{blue}
	Let $S := \sum_{i=1}^{k-1} X_i^2$ and $T := \frac{1 + \sum_{i=1}^{k} X_i}{2}$. If $\delta^2 \geq T^2 + S$, then the above inequality implies either $x_1 = \cdots = x_{k-1} = 0$ and $x_k = -\lfloor C \eta_0 \rfloor / \lfloor C \eta_k \rfloor$, or
	\[
	H \leq \frac{1}{c_4} \left( \log(C c_3) - \log \left(\sqrt{\delta^2 - S}-T \right) \right).
	\]
\end{lemma}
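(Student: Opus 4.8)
The plan is to translate the approximation inequality $|\eta_0+x_1\eta_1+\cdots+x_k\eta_k|\le c_3\exp(-c_4H)$ into a statement about the Euclidean distance from the point $y=(0,\dots,0,-\lfloor C\eta_0\rfloor)^{T}$ to the lattice $L$ spanned by the columns of $\mathcal{A}$, and then to play the computable lower bound $\delta$ for that distance (supplied by Lemma \ref{red}) against an elementary upper bound coming from the inequality itself.

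First I would fix a tuple $(x_1,\dots,x_k)$ of integers with $|x_i|\le X_i$ satisfying the inequality, and attach to it the lattice vector $v=\mathcal{A}(x_1,\dots,x_k)^{T}\in L$. Reading the columns of $\mathcal{A}$, the first $k-1$ coordinates of $v$ are $x_1,\dots,x_{k-1}$ and its last coordinate is $\sum_{i=1}^k x_i\lfloor C\eta_i\rfloor$, so that
\[
v-y=\Bigl(x_1,\dots,x_{k-1},\ \lfloor C\eta_0\rfloor+\textstyle\sum_{i=1}^k x_i\lfloor C\eta_i\rfloor\Bigr)^{T}.
\]
The key identity comes from writing $\lfloor C\eta_i\rfloor=C\eta_i-\{C\eta_i\}$: the last coordinate then equals $C\bigl(\eta_0+\sum_i x_i\eta_i\bigr)$ minus the accumulated rounding errors $\{C\eta_0\}+\sum_i x_i\{C\eta_i\}$. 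The hypothesis bounds the first term by $Cc_3\exp(-c_4H)$, while the rounding errors contribute at most $T=\tfrac12(1+\sum_i X_i)$; hence the last coordinate of $v-y$ has absolute value at most $Cc_3\exp(-c_4H)+T$.

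Next I would split on whether $v=y$. If $v=y$, its first $k-1$ coordinates vanish, forcing $x_1=\cdots=x_{k-1}=0$, and equating last coordinates gives $x_k\lfloor C\eta_k\rfloor=-\lfloor C\eta_0\rfloor$, i.e.\ the degenerate alternative $x_k=-\lfloor C\eta_0\rfloor/\lfloor C\eta_k\rfloor$. Otherwise $v-y$ is nonzero and lies at distance at least $\ell(L,y)$ in the relevant sense: if $y\notin L$ this is immediate since $v\in L$, and if $y\in L$ then $v-y$ is itself a nonzero lattice vector, so in either case Lemma \ref{red} gives $\|v-y\|\ge\delta$. Squaring and noting that the first $k-1$ coordinates contribute at most $\sum_{i=1}^{k-1}X_i^2=S$, I obtain
\[
\delta^2\le\|v-y\|^2\le S+\bigl(\text{last coordinate of }v-y\bigr)^2,
\]
whence that last coordinate has absolute value at least $\sqrt{\delta^2-S}$; here the hypothesis $\delta^2\ge T^2+S$ is exactly what makes the radicand nonnegative and guarantees $\sqrt{\delta^2-S}\ge T$.

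Finally I would combine the lower and upper bounds on the last coordinate, namely $\sqrt{\delta^2-S}\le Cc_3\exp(-c_4H)+T$. Since $\sqrt{\delta^2-S}-T\ge0$, rearranging yields $\exp(-c_4H)\ge(\sqrt{\delta^2-S}-T)/(Cc_3)$, and taking logarithms gives the stated bound $H\le c_4^{-1}\bigl(\log(Cc_3)-\log(\sqrt{\delta^2-S}-T)\bigr)$. The reasoning is mostly bookkeeping once the dictionary between the inequality and the lattice is in place; I expect the only genuinely delicate points to be the careful treatment of the degenerate case $y\in L$ (which is precisely what produces the first alternative in the statement) and recognizing that the condition $\delta^2\ge T^2+S$ serves no purpose beyond keeping $\sqrt{\delta^2-S}-T$ positive, so that the concluding logarithm is well defined.
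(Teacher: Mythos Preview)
The paper does not prove this lemma; it is quoted verbatim from Smart's book and used as a black box, so there is nothing in the paper to compare your argument against. Your outline is in fact the standard proof of Lemma VI.1: map the solution to the lattice point $v=\mathcal{A}(x_1,\dots,x_k)^T$, bound $\|v-y\|$ from below by $\delta$ via Lemma~\ref{red}, bound the last coordinate of $v-y$ from above using the approximation hypothesis plus the accumulated rounding errors, and solve for $H$.

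One small point worth tightening: you assert that the rounding errors contribute at most $T=\tfrac12(1+\sum_i X_i)$, but with floor functions each fractional part lies in $[0,1)$, so the naive triangle inequality only gives $1+\sum_i X_i=2T$. The bound $T$ is what one obtains when the entries of $\mathcal{A}$ are formed with nearest-integer rounding (as in Smart's original formulation), where each error is at most $1/2$; the paper's use of $\lfloor\cdot\rfloor$ is a transcription choice. This does not affect the structure of your argument, only the constant, but you should either note the nearest-integer convention or replace $T$ by $2T$ in the intermediate step.
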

\subsection{Continued fractions}
\noindent The following lemma also helps in refining upper bounds on certain variables.
\begin{lemma}[\cite{bravo2013conjecture}, Lemma 4]\label{Lemma2.4}
	Let $M$ be a positive integer, and let $p/q$ be a convergent of the continued fraction of the irrational number $\gamma$ such that $q > 6M$. Suppose $A, B, \mu$ are real numbers satisfying $A > 0$ and $B > 1$. If  
	\[
	\epsilon := ||\mu q|| - M||\gamma q|| > 0,
	\]
	then the inequality  
	\[
	0 < \lvert u\gamma - v + \mu \rvert < AB^{-w},
	\]
	has no solutions in positive integers $u, v,$ and $w$ satisfying  
	\[
	u \leq M \quad \text{and} \quad w \geq \frac{\log(Aq/\epsilon)}{\log B}.
	\]
\end{lemma}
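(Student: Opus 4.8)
The plan is to argue by contradiction. I would assume that positive integers $u,v,w$ exist satisfying $u \le M$, $w \ge \log(Aq/\epsilon)/\log B$, and $0 < |u\gamma - v + \mu| < AB^{-w}$, and then derive a lower bound on $|u\gamma - v + \mu|$ strong enough to contradict this upper bound. The only structural input needed is the best-approximation property of a convergent $p/q$ of $\gamma$: for such a convergent one has $\|\gamma q\| = |q\gamma - p|$, and this quantity is small. The role of the hypothesis $q > 6M$ is precisely to guarantee that $p$ is the nearest integer to $q\gamma$, so that this identity is available and the resulting bound is sharp: from the theory of convergents $|q\gamma - p| < 1/q < 1/6 < 1/2$, which pins down $p$ as the closest integer.

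First I would clear denominators by multiplying the central quantity by $q$. The key observation is that $vq - up$ is an integer, so by the very definition of the distance to the nearest integer, $\|\mu q\| \le |\mu q - (vq - up)|$. I would then rewrite the right-hand side using the elementary regrouping identity $\mu q - vq + up = q(u\gamma - v + \mu) - u(q\gamma - p)$, which isolates the convergent error $q\gamma - p$ and the target quantity $u\gamma - v + \mu$ as the two contributing terms.

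Next I would apply the triangle inequality to this identity, together with $u \le M$ and $|q\gamma - p| = \|\gamma q\|$, to obtain $\|\mu q\| \le q\,|u\gamma - v + \mu| + M\,\|\gamma q\|$. Rearranging and invoking the definition $\epsilon = \|\mu q\| - M\,\|\gamma q\|$ yields the lower bound $|u\gamma - v + \mu| \ge \epsilon/q$. Combining this with the assumed upper bound $|u\gamma - v + \mu| < AB^{-w}$ forces $\epsilon/q < AB^{-w}$, i.e. $B^w < Aq/\epsilon$; taking logarithms, which is legitimate since $B > 1$ and $\epsilon > 0$, gives $w < \log(Aq/\epsilon)/\log B$, directly contradicting the hypothesis on $w$. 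Hence no such triple $(u,v,w)$ can exist.

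I expect the argument to be short once the regrouping identity is in hand, so there is no deep obstacle; the work is almost entirely bookkeeping, keeping careful track of which quantities are integers. The one point genuinely requiring care is the justification that $\|\gamma q\| = |q\gamma - p|$, i.e. that the nearest integer to $q\gamma$ really is $p$ — this is exactly where the continued-fraction hypothesis and the size condition $q > 6M$ enter, and I would dispose of it first, by citing the standard fact that every convergent is a best approximation, before running the triangle-inequality estimate above.
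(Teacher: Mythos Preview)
The paper does not supply its own proof of this lemma: it is quoted verbatim from \cite{bravo2013conjecture} (and ultimately from Dujella--Peth\H{o} \cite{Dujella1998}) and used as a black box, so there is nothing in the paper to compare your argument against. Your proposal is the standard proof and is correct: the regrouping identity $\mu q-(vq-up)=q(u\gamma-v+\mu)-u(q\gamma-p)$ together with the triangle inequality gives $\|\mu q\|\le q\,|u\gamma-v+\mu|+M\,\|\gamma q\|$, hence $|u\gamma-v+\mu|\ge\epsilon/q$, and comparison with $AB^{-w}$ yields the contradiction $w<\log(Aq/\epsilon)/\log B$.

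One small remark on your commentary: the hypothesis $q>6M$ is not really needed ``precisely'' to pin down $p$ as the nearest integer to $q\gamma$ --- any convergent with $q\ge 2$ already satisfies $|q\gamma-p|<1/q\le 1/2$. The stronger bound $q>6M$ is a practical condition (it forces $M\|\gamma q\|<M/q<1/6$, leaving room for $\epsilon>0$ in applications) rather than a logical necessity for the argument you wrote, since here $\epsilon>0$ is simply assumed. This does not affect the validity of your proof.
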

\noindent Lemma \ref{Lemma2.4} cannot be applied where $\mu=0$ or when $\mu$ is a linear combination of $1$ and $\gamma$ with integer coefficients, as in such cases $\epsilon$ becomes negative for sufficiently large $M$. When this happens, a classical result due to Legendre (Lemma 2.5,\cite{math8081360}) is applied instead.
\begin{lemma}[\cite{math8081360}]\label{Lemma 2.5}
	Let $\tau,$ be an irrational number, $M$ be a positive integer and $\frac{p_k}{q_k}(k=0,1,2,\cdots)$ be all the convergents of the continued fraction $[a_0,a_1,\cdots]$ of $\tau.$ let $N$ be such that $q_N>M.$ Then putting $a_M:=max\{a_i:=0,1,\cdots ,N\},$ the inequality \begin{align*}
		|m\tau-n|>\frac{1}{(a_M+2)m},
	\end{align*} holds for all pairs $(n,m) $of integers with $0<m<M.$   
\end{lemma}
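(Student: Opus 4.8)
The plan is to deduce the inequality from two classical facts about the continued fraction expansion of an irrational $\tau$: the theorem of best approximation of the second kind, and the exact identity $|q_k\tau - p_k| = 1/(q_k\theta_{k+1} + q_{k-1})$, where $\theta_{k+1} = [a_{k+1}; a_{k+2}, \ldots]$ denotes the $(k+1)$-th complete quotient. With these in hand, the entire argument reduces to selecting the appropriate convergent and carrying out careful index bookkeeping so that the relevant partial quotient is dominated by the maximum $a_M$.

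First I would fix a pair $(n,m)$ of integers with $0 < m < M$. Since the denominators satisfy $q_0 = 1$ and are strictly increasing for $k \geq 1$, and since $q_N > M > m \geq 1$, there is a unique index $k$ with $0 \leq k \leq N-1$ such that $q_k \leq m < q_{k+1}$. In particular $k+1 \leq N$, so the partial quotient $a_{k+1}$ occurs among $a_0, \ldots, a_N$, whence $a_{k+1} \leq a_M$. This index control is precisely what links the local approximation quality at step $k$ to the global quantity $a_M$.

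Next I would invoke the best-approximation property of convergents: for every integer $m$ with $0 < m < q_{k+1}$ and every integer $n$, one has $|m\tau - n| \geq \|m\tau\| \geq |q_k\tau - p_k|$, where $\|\cdot\|$ is the distance to the nearest integer. It then remains to bound $|q_k\tau - p_k|$ from below. Writing $\tau = (p_k\theta_{k+1} + p_{k-1})/(q_k\theta_{k+1} + q_{k-1})$ and using $p_kq_{k-1} - p_{k-1}q_k = (-1)^{k-1}$ gives $|q_k\tau - p_k| = 1/(q_k\theta_{k+1} + q_{k-1})$. Because $\tau$ is irrational the complete quotient $\theta_{k+2}$ exceeds $1$, so $\theta_{k+1} = a_{k+1} + 1/\theta_{k+2} < a_{k+1} + 1 \leq a_M + 1$; combined with $q_{k-1} \leq q_k \leq m$ this yields $q_k\theta_{k+1} + q_{k-1} < q_k(a_M + 1) + q_k = (a_M + 2)q_k \leq (a_M + 2)m$. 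Therefore $|m\tau - n| \geq |q_k\tau - p_k| > 1/\big((a_M + 2)m\big)$, which is the asserted inequality, the final strictness coming from $\theta_{k+2} > 1$.

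I do not anticipate a genuine analytic obstacle, since this is a classical Legendre-type estimate; the delicate part is entirely combinatorial. The main points requiring care are ensuring the chosen index satisfies $k+1 \leq N$ so that $a_{k+1}$ is truly captured by $a_M$, and applying the best-approximation theorem in its correct form, namely for all integers $n$ over the full denominator range $0 < m < q_{k+1}$ rather than only $m \leq q_k$. A secondary point is handling the boundary conventions $q_{-1} = 0$ and $q_0 = 1$ cleanly, as these are what guarantee the estimate $q_{k-1} \leq q_k$ used in the closing chain of inequalities.
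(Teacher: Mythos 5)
The paper offers no proof of this lemma to compare against: it is quoted as a known Legendre-type result from Qu and Zeng \cite{math8081360} and used as a black box, so your blind attempt must be judged on its own, and it is correct. The index bookkeeping works: taking $k$ to be the largest index with $q_k \leq m$ (this is the safe formulation, since $q_0 = q_1 = 1$ when $a_1 = 1$ can defeat literal uniqueness of a $k$ with $q_k \leq m < q_{k+1}$, though your bracketing condition happens to pin down $k$ anyway), the hypothesis $q_N > M > m$ forces $k \leq N-1$, hence $a_{k+1} \leq a_M$ as required. The identity $|q_k\tau - p_k| = 1/(q_k\theta_{k+1} + q_{k-1})$ follows from $\tau = (p_k\theta_{k+1}+p_{k-1})/(q_k\theta_{k+1}+q_{k-1})$ and $p_kq_{k-1}-p_{k-1}q_k = \pm 1$ exactly as you say; irrationality gives $\theta_{k+2} > 1$ strictly, so $\theta_{k+1} < a_{k+1}+1 \leq a_M + 1$, and with $q_{-1}=0$, $q_{k-1} \leq q_k \leq m$ you get $q_k\theta_{k+1}+q_{k-1} < (a_M+2)m$, which delivers the strict inequality claimed in the statement. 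Your appeal to best approximation of the second kind, $|m\tau - n| \geq |q_k\tau - p_k|$ for all integers $n$ and all $0 < m < q_{k+1}$, is the standard form of the theorem and is valid also at $k=0$ (for $1 \leq m < q_1 = a_1$ one checks directly that $\min\{m/\theta_1,\, 1 - m/\theta_1\} \geq 1/\theta_1$ since $\theta_1 > a_1 \geq m+1$), so no boundary case is left open. Incidentally, your derivation also exposes the typographical defects in the paper's statement of the lemma ($a_M := \max\{a_i := 0,1,\cdots,N\}$ should read $a_M := \max\{a_i : i = 0,1,\ldots,N\}$), and confirms that the inequality indeed holds with the strict sign as printed.
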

\noindent All computations in this work were carried out using SageMath 10.6.
\section{Proof of Theorem \ref{thm1.1l}}
\subsection{Preliminaries}
Assume that \eqref{eq:main} holds,  let us examine the relations between the variables in \eqref{eq:main}. The number of digits of \( L_p^{(k)} \) can be written as  
\begin{equation} \label{eq:d}
	d = \lfloor \log_{10} L_p^{(k)} \rfloor + 1,	
\end{equation}
where \( \lfloor \Phi \rfloor \) is the greatest integer less than or equal to $\Phi$, the floor function. Therefore,  \begin{align*}
		d &= \lfloor \log_{10} L_p^{(k)} \rfloor + 1 \leq 1 + \log_{10} L_p^{(k)} \leq 1 + \log_{10} (2\alpha^p) \\
	&= 1 + \log_{10} 2 + p \log_{10} \alpha < 2
	p\log_{10}2 < p+2,
\end{align*}
and
\begin{align*}
	d &= \lfloor \log_{10} L_p^{(k)} \rfloor + 1 
	> \log_{10} L_p^{(k)} 
	\geq \log_{10} \alpha^{p-1} \\
	&= (p - 1) \log_{10} \alpha 
	\geq \frac{p - 1}{5}.
\end{align*} 
We can conclude that \begin{equation}\label{dbound}
	\frac{p-1}{5}<d<p+2.
\end{equation} 
One can also see from \eqref{eq:d} that \begin{equation}
	L_p^{(k)}<10^d<10L_p^{(k)}.
\end{equation}
From the relations \eqref{dbound}, \eqref{2.2} and \eqref{eq:main}, we rewrite 
\begin{align*}
	\alpha^{n - 1} 
	&\leq L_n^{(k)} = L_m^{(k)} \cdot 10^d + L_p^{(k)} 
	\leq L_m^{(k)} \cdot 10 \cdot L_p^{(k)} + L_p^{(k)} \\
	&\leq 11 L_m^{(k)} L_p^{(k)} 
	\leq 44 \alpha^{m + p} 
	< \alpha^{m + p + 8},
\end{align*}
and
\begin{align*}
	2\alpha^n\geq L_n^{(k)} 
	&= L_m^{(k)} \cdot 10^d + L_p^{(k)} 
	> L_m^{(k)} \cdot L_p^{(k)} + L_p^{(k)}> L_m^{(k)} L_p^{(k)}\geq \alpha^{m+p-2},
\end{align*}
hence, we get the relation
\begin{equation}\label{eq:3.4}
	m+p-2<n<m+p+8.
\end{equation}
\noindent Lastly here, since we are interested in a concatenation of two $k$-Lucas numbers, we disregard all cases when $n\leq3$, Moreover, when $n=4$, we see from Table\ref{Table1} that $L_4^{(3)}=10$ and $L_4^{(k)}=12$ whenever $k\geq4$. Clearly $L_4^{(3)}=10$ is not a concatenation of two $k$-Lucas numbers, so it does not solve \eqref{eq:main}. On the other hand, $L_4^{(k)}=12$ solves \eqref{eq:main}, so we state it in the main result. From now on, we work with $n\geq5$. 
\subsection{The Case $n \leq k$} \label{sub3.2}
Here, we assume that \( 5 \leq n \leq k \). This means that $L_n^{(k)} = 3 \cdot 2^{n - 2}.$ Thus, Equation \eqref{eq:main} becomes 
\begin{equation}\label{eq:3.5}
3 \cdot 2^{n - 2} = 3 \cdot 2^{m - 2} \cdot 10^d + 3 \cdot 2^{p - 2}.	
\end{equation}
Now, if \( p \leq m \), then \eqref{eq:3.5} becomes
\[
2^{n - p} = 2^{m - p} \cdot 10^d + 1,
\]
which implies that $
0 \equiv 1 \pmod{2},$ a contradiction.
Next, if \( p > m \), then \eqref{eq:3.5} becomes \[
2^{n - m} = 10^d + 2^{p - m},
\]
which can be rewritten as
\[
2^{p - m} (2^{n - p} - 1) = 10^d = 2^d \cdot 5^d.
\]
By the Fundamental Theorem of Arithmetic, we have
\[
2^{p - m} = 2^d \quad \text{and} \quad 2^{n - p} - 1 = 5^d.
\]
The second equation has no positive integer solutions by Lemma \ref{bangzig}, thus Equation \eqref{eq:main} has no solutions with \( 5 \leq n \leq k \).
From now on, we assume \( n > k \).	  
\subsection{The case $n>k$}
\subsubsection{A bound for $n$ in terms of $k$}
If \( n >k \), we write a SageMath code to search for values of \( m, n, p, \) and $k$ satisfying \eqref{eq:main} in the range \( 0 \leq m, p \leq 500 \) with $n\in (m+p-2,m+p+8)$ via \eqref{eq:3.4} and $k<n$. This yields the solutions stated in Theorem~\ref{thm1.1l}. So from now on, we take \(\max\{m, p\} > 500\).
We now rewrite \eqref{eq:main} using \eqref{3.6} as
\begin{equation*}
	f_k(\alpha)(2\alpha - 1) \alpha^{n - 1} + e_k(n) = \bigl(f_k(\alpha)(2\alpha - 1) \alpha^{m - 1} + e_k(m)\bigr) \cdot 10^d + L_p^{(k)},
\end{equation*}
and obtain
\begin{equation*}
	f_k(\alpha)(2\alpha - 1) \alpha^{n - 1} - f_k(\alpha)(2\alpha - 1) \alpha^{m - 1} \cdot 10^d = e_k(m) \cdot 10^d + L_p^{(k)} - e_k(n).
\end{equation*}

Dividing through by \( f_k(\alpha)(2\alpha - 1) \alpha^{n - 1} \) and taking absolute values on both sides gives
\begin{align*}
	\left| 1 - \frac{10^d}{\alpha^{n - m}} \right|
	&\leq \left| \frac{e_k(m) \cdot 10^d}{f_k(\alpha)(2\alpha - 1) \alpha^{n - 1}}
	+  \frac{L_p^{(k)}}{f_k(\alpha)(2\alpha - 1) \alpha^{n - 1}}
	-  \frac{e_k(n)}{f_k(\alpha)(2\alpha - 1) \alpha^{n - 1}} \right|\\
	&\leq \left| \frac{e_k(m) \cdot 10^d}{f_k(\alpha)(2\alpha - 1) \alpha^{n - 1}} \right|
	+ \left| \frac{L_p^{(k)}}{f_k(\alpha)(2\alpha - 1) \alpha^{n - 1}} \right|
	+ \left| \frac{e_k(n)}{f_k(\alpha)(2\alpha - 1) \alpha^{n - 1}} \right|\\
	&< \left| \frac{30L_p^{(k)}}{(2\alpha - 1) \alpha^{n - 1}} \right|
	+ \left| \frac{2L_p^{(k)}}{(2\alpha - 1) \alpha^{n - 1}} \right|
	+ \left| \frac{3}{(2\alpha - 1) \alpha^{n - 1}} \right|\\
	&< \frac{30(2\alpha^p)}{\alpha^n}+\frac{4\alpha^p}{\alpha^n}+\frac{3}{\alpha^n} < \frac{70}{\alpha^{n-p}}.
\end{align*}
Hence, we let 
\begin{align}\label{eq:3.22}
	|\Gamma_1| &:= \left| 1 - \frac{10^d}{\alpha^{n-m}} \right| < \frac{70}{\alpha^{n-p}}.
\end{align}
Notice that $ \Gamma_1 \neq 0 $ otherwise we would have  $\alpha^{n-m} = 10^d,$ a contradiction since $ \alpha^{n-m}$  is a unit in $\mathbb{Q}(\alpha) $ and yet $10^d $ is not. 
The relevant algebraic number field is \( \mathbb{K} = \mathbb{Q}(\alpha) \) with degree $D=k.$ Setting $t:=2,$ we define \begin{align*}
	\eta_1 &:= \alpha, \quad \eta_2 := 10, \\
	b_1 &:= -(n - m), \quad b_2 := d,
\end{align*} and  $h(\eta_1) = (1/k) \log (\alpha), h(\eta_2) = \log 10.$
Thus we take $ A_1 = \log (\alpha), A_2 = k \log 10.$ Note that from \eqref{dbound}, $d<p+2$ and via \eqref{eq:3.4}, $p<n-m+4$ thus $d<n-m+6$ and hence we take $B:=n-m+6$. Applying Theorem \ref{thm1}, we get,
\begin{align*}
	&\log |\Gamma_1| > -1.4 \cdot 30^5 \cdot 2^{4.5} \cdot k^2 (1+\log k)(1+\log(n-m+6)) \log \alpha \cdot k \log 10.
\end{align*}
Also from \eqref{eq:3.22}, we have
\[
\log \Gamma_1 < \log 70 - (n - p) \log \alpha.
\]
Also, since \( \log \alpha < \log 2 \), and using the inequality
\[
1 + \log k = \log k \left(1 + \frac{1}{\log k} \right) < 2 \log k,
\]
and similarly,
\[
1 + \log(n - m+6) = \log(n - m+6) \left(1 + \frac{1}{\log(n - m+6)} \right) < 2 \log(n - m+6),
\]
for \( k \geq 3 \) and \( n \geq 5 \), we derive the bound 
\begin{equation}\label{eq:n-p}
n - p < 7.2 \cdot 10^9 k^3 \log k \log(n - m+6).	
\end{equation}
Next, we revisit \eqref{eq:main} and by \eqref{3.6}, we rewrite it as 
\begin{equation*}
	f_k(\alpha)(2\alpha - 1) \alpha^{n-1} + e_k(n) = L_m^{(k)} \cdot 10^d + f_k(\alpha)(2\alpha - 1) \alpha^{p-1} + e_k(p),
\end{equation*}
we obtain
\begin{align*}
	f_k(\alpha)(2\alpha - 1) \alpha^{n-1} - f_k(\alpha)(2\alpha - 1) \alpha^{p-1} - L_m^{(k)} \cdot 10^d &= -e_k(n) + e_k(p), \\
	f_k(\alpha)(2\alpha - 1) \alpha^{n-1}(1 - \alpha^{p-n}) - L_m^{(k)} \cdot 10^d &= -e_k(n) + e_k(p).
\end{align*}
Dividing through by $f_k(\alpha)(2\alpha - 1) \alpha^{n-1}(1 - \alpha^{p-n})$ and taking absolutes on both sides yields 
\begin{align*}
	\left|1 - \frac{L_m^{(k)} \cdot 10^d}{f_k(\alpha)(2\alpha - 1) \alpha^{n-1}(1 - \alpha^{p-n})}\right| &\leq  \left|\frac{-e_k(n)}{f_k(\alpha)(2\alpha - 1) \alpha^{n-1}(1 - \alpha^{p-n})} + \frac{e_k(p)}{f_k(\alpha)(2\alpha - 1) \alpha^{n-1}(1 - \alpha^{p-n})} \right|	\\
	&\leq  \left|\frac{e_k(n)}{f_k(\alpha)(2\alpha - 1) \alpha^{n-1}(1 - \alpha^{p-n})} \right|+\left| \frac{e_k(p)}{f_k(\alpha)(2\alpha - 1) \alpha^{n-1}(1 - \alpha^{p-n})} \right| \\
	&< \frac{18}{\alpha^{n-1}},
\end{align*} 
where we have used the estimates \( \frac{1}{f_k(\alpha)} < 2 \) and \( \frac{1}{1 - \alpha^{p-n}} < 3 \) for all \( k \geq 3 \) and \( n - p \geq 1 \).
Fix
\begin{equation}\label{gamma2}
	\left| \Gamma_2 \right| := \left| 1 - \frac{L_m^{(k)} \cdot 10^d}{f_k(\alpha)(2\alpha - 1) \alpha^{n-1} (1 - \alpha^{p-n})} \right| < \frac{18}{\alpha^{n-1}},
\end{equation}
 Suppose \( \Gamma_2 = 0 \); then we would have
\[
L_m^{(k)} \cdot 10^d = f_k(\alpha)(2\alpha - 1)\alpha^{n-1}(1 - \alpha^{p-n}).
\]
Conjugating both sides by an automorphism \( \rho_i : \alpha \mapsto \alpha_i \), for \( i \geq 2 \), and taking absolute values, we apply \eqref{fkprop} to get
\[
10 < \left| L_m^{(k)} \cdot 10^d \right| = \left| f_k(\alpha) \right| \cdot \left| 2\alpha - 1 \right| \cdot \left| \alpha^{n-1} \right| \cdot \left| 1 - \alpha^{p-n} \right| < 6,
\]
which is clearly false. This contradiction implies that \( \Gamma_2 \neq 0 \), so we may now proceed to apply Theorem~\ref{thm1} to \eqref{gamma2}. The number field remains \( \mathbb{K} = \mathbb{Q}(\alpha) \) with degree $D=k.$ and $t:=3,$
\begin{align*}
	\eta_1 &:= \alpha, \quad 
	\eta_2 := 10, \quad 
	\eta_3 := \frac{L_m^{(k)}}{f_k(\alpha)(2\alpha - 1)(1 - \alpha^{p-n})}, \\
	b_1 &:= -(n - 1), \quad 
	b_2 := d, \quad 
	b_3 := 1.
\end{align*}
We compute the logarithmic heights:
\[
h(\eta_1) = h(\alpha) = \frac{1}{k} \log \alpha, \qquad 
h(\eta_2) = \log 10, \qquad 
A_1 = \log \alpha, \qquad 
A_2 = k \log 10.
\]
To bound \( h(\eta_3) \), we use:
\begin{align*}
	h(\eta_3) 
	&= h\left( \frac{L_m^{(k)}}{f_k(\alpha)(2\alpha - 1)(1 - \alpha^{p - n})} \right) \\
	&\leq h(2\alpha^m) + h(f_k(\alpha)) + h(2\alpha-1) + h(1-\alpha^{p - n}) \\
	&\leq \frac{m}{k} \log \alpha + 2 \log k + \frac{1}{k} \log \alpha + \frac{|p - n|}{k} \log \alpha + 4 \log 2.\\
	&=\left(m+1+|p-n|\right)\frac{\log\alpha}{k}+\log k \left(2+\frac{4\log 2}{\log k}\right).
\end{align*}
\noindent Since $m+1<n-p+3,$ then 
\begin{align*}
	h(\eta_3)<(2|p-n|+3)\frac{\log \alpha}{k}+5\log k,
\end{align*}
\noindent thus 
\begin{align*}
	A_3=(2|p-n|+3)\log \alpha +5k\log k.
\end{align*}
Applying Theorem~\ref{thm1} to \eqref{gamma2}, we obtain the lower bound
\[
\log \Gamma_2 > -1.4 \cdot 30^6 \cdot 3^{4.5} \cdot k^2 (1 + \log k)(1 + \log(n - 1)) (\log \alpha)(k \log 10)((2|p-n|+3)\log \alpha +5k\log k).
\]
Also, from \eqref{gamma2}, we have 
\[
\log \Gamma_2 < \log 18 - (n - 1) \log \alpha.
\]
Comparing both bounds yields
\begin{equation}\label{gamma22}
	(n-1) < 1.4 \times 10^{12} k^3 \log k \log(n - 1)((2|p-n|+3)\log \alpha +5k\log k).
\end{equation}
\noindent Now, we examine the cases $p\leq m$ and $m<p$ separately.\\
\textbf{Case 1:} When $p\leq m$, then $n-m\leq n-p$ and hence we rewrite \eqref{eq:n-p} as 
\begin{align*}
	n-p&<7.2\cdot 10^{9}k^3 \log k \log (n-p+6)\\
	&<7.2\cdot10^9k^3\log k \log {7(n-p)} \\
	&<2.9\cdot10^{10} k^3\log k \log {(n-p)}
\end{align*}
To proceed, we cite this important result, [\cite{Sanchez2014}, Lemma 7]. \begin{lemma}\label{lemma3.1}
	If $e\geq 1$ and $H>(4e^2)^e$, then $$\frac{f}{(\log(f))^e}<H \Rightarrow f<2^e H(\log {(H)})^e.$$
\end{lemma}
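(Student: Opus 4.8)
The plan is to reduce the whole implication to the monotonicity of the auxiliary function $g(x) := x/(\log x)^e$ together with one boundary estimate that is where the threshold $H > (4e^2)^e$ gets used. First I would record that $g$ is eventually increasing: since
\[
g'(x) = \frac{\log x - e}{(\log x)^{e+1}},
\]
the function $g$ is strictly increasing on $(e^e,\infty)$. Because $4e^2 > e$ we have $(4e^2)^e > e^e$, so the hypothesis forces $H > e^e$; and as $\log H > 1$ the candidate bound $M := 2^e H (\log H)^e$ satisfies $M > 2H > H > e^e$. Thus $H$, $M$, and any $f \ge M$ all lie in the range where $g$ is monotone, which is all I will need.

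The heart of the matter is the claim that $g(M) \ge H$. Writing this out, $g(M) \ge H$ is equivalent to $2^e (\log H)^e \ge (\log M)^e$, and since $t \mapsto t^e$ is increasing on $[0,\infty)$ for $e \ge 1$, taking $e$-th roots turns this into $2\log H \ge \log M = e\log 2 + \log H + e\log\log H$, i.e.\ into the clean inequality
\[
\log H \ge e\,\log(2\log H).
\]
So everything comes down to deducing this from $H > (4e^2)^e$. I would set $u := \log H$; the hypothesis gives $u > e\log(4e^2) = 2e\log(2e)$. Considering $\phi(u) := u - e\log(2u)$, we have $\phi'(u) = 1 - e/u > 0$ for $u > e$, and since $2\log(2e) \approx 3.39 > 1$ the endpoint $u_0 := 2e\log(2e)$ already exceeds $e$, so $\phi$ is increasing on the relevant range. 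A direct evaluation at $u_0$ collapses to $\phi(u_0) = e\bigl(1 - \log\log(2e)\bigr)$, which is positive because $\log(2e) = 1 + \log 2 < e$. Hence $\phi(u) > 0$ throughout, giving $\log H > e\log(2\log H)$ and therefore $g(M) > H$.

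With these two ingredients the conclusion follows by contradiction. Suppose $f \ge M$. Since $f \ge M > e^e$ and $g$ is strictly increasing on $(e^e,\infty)$, we obtain $g(f) \ge g(M) > H$, that is $f/(\log f)^e \ge H$, contradicting the hypothesis $f/(\log f)^e < H$. Therefore $f < M = 2^e H (\log H)^e$, as claimed.

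I expect the main obstacle to be precisely the boundary estimate in the second paragraph: the exponent $e$ sits simultaneously inside and outside the logarithms, so the crude term $(\log M)^e$ must be handled delicately, and the peculiar-looking threshold $(4e^2)^e$ is engineered exactly so that $\phi(u_0)$ comes out nonnegative. Everything else — the sign of $g'$, the sign of $\phi'$, and the final monotone comparison — is routine calculus, and the argument goes through uniformly in the real parameter $e \ge 1$.
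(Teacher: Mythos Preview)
The paper does not actually prove this lemma; it is quoted verbatim as Lemma~7 of S\'anchez--Luca (reference \cite{Sanchez2014}) and used as a black box. So there is no ``paper's own proof'' to compare against, and your argument has to stand on its own.

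Your overall strategy is exactly right: reduce everything to showing $g(M)\ge H$ for $M=2^eH(\log H)^e$, then invoke the monotonicity of $g(x)=x/(\log x)^e$ on $(e^e,\infty)$. The only real slip is in the evaluation of $\phi(u_0)$, where you have tacitly replaced $\log e$ by $1$ (and written $\log(2e)=1+\log 2$, and the numerical value $2\log(2e)\approx 3.39$). In the lemma $e$ is a free real parameter with $e\ge 1$, not Euler's constant, so these substitutions are not valid. With the parameter $e$ kept as is, the correct simplification is
\[
\phi(u_0)=2e\log(2e)-e\log\bigl(4e\log(2e)\bigr)=e\bigl(\log e-\log\log(2e)\bigr),
\]
and your stated value $e(1-\log\log(2e))$ is wrong for $e\neq 2.718\ldots$; in particular the justification ``$\log(2e)<e$ (Euler)'' fails already for $e=10$, since $\log 20\approx 3>2.718$. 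Fortunately the corrected expression is still positive: one needs $e>\log(2e)$, and the function $\psi(e)=e-\log(2e)$ satisfies $\psi(1)=1-\log 2>0$ and $\psi'(e)=1-1/e\ge 0$ for $e\ge 1$. With this repair the rest of your argument goes through unchanged, including the final contradiction $g(f)\ge g(M)>H$.
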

\noindent We take $e:=1$ and $H:=2.9\cdot10^{10}k^3\log k, f=n-p $ hence by Lemma \ref{lemma3.1} we obtain \begin{align}
	(n-p)<1.7\cdot 10^{12}k^3(\log k)^2.
\end{align}
\noindent From \eqref{eq:3.4}, $n<m+p+8<2m+8$ and $m<n-p+2$, we find that \[n<2(1.7\cdot 10^{12}k^3(\log k)^2)+12, \] and thus \begin{equation} \label{nbound}
	n<3.5\cdot 10^{12}k^3(\log k)^2.
\end{equation}
\textbf{Case 2:} When $m<p$, since $\alpha <k$, we have that \[(2|p-n|+3)\log \alpha +5k \log k < 5k\log k + (2|p-n|+3)\log k.\]
From $2|p-n|+3 < 5|p-n|$ for $|p-n|>0$ and \[5k\log k + 5|p-n|\log k = |p-n|k\log k \left(\frac{5}{|p-n|}+\frac{5}{k}\right)<7|p-n|k\log k,\] \noindent \eqref{gamma22} becomes \begin{align*}
	n-1&<9.9\cdot 10^{12}k^4(\log k)^2\log {(n-1)}\cdot |p-n| \\
	&<9.9\cdot 10^{12}k^4(\log k)^2\log {(n-1)}\cdot 7.2\cdot 10^9k^3 \log k \log{(n-m+6)}.
\end{align*}
Note that $n-m<n-p<n-1$ and thus we get \begin{align}\label{bound3}
	n-1<1.5\cdot 10^{23}k^7(\log k)^3(\log{(n-1)})^2,
\end{align}
where we have used the fact that $$n-1+6=(n-1)\left(1+\frac{6}{n-1}\right)<3(n-1),$$ and $$\log{3(n-1))}=\log 3 + \log {(n-1)}=\log{(n-1)}\left(1+\frac{\log 3}{\log{(n-1)}}\right)<2\log{(n-1)}.$$
Applying Lemma \ref{lemma3.1} to Equation \eqref{bound3} yields $n-1<2.1\cdot 10^{27}k^7(\log k)^5$ and thus \begin{equation}\label{finaln}
	n<2.2\cdot 10^{27}k^7(\log k)^5.
\end{equation}
Thus in both cases we can see that the bound in \eqref{finaln} is valid. We now analyze the two cases based on the value of $k.$ 
\subsubsection{The case $k\leq 500$}  When $k\leq 500$, it follows from \eqref{finaln} that $$n<2.2\cdot 10^{27}k^7(\log k)^5.$$ We proceed to reduce the upper bound for $n$. First, we recall \eqref{eq:3.22} as,\begin{align}\label{eq:3.7}
	\Gamma_1 &= \frac{10^d}{\alpha^{n-m}} -1  =e^{\Lambda_1}-1.
\end{align}
We already showed that $\Gamma_1\neq 0,$ thus $\Lambda_1\neq 0.$ If $(n-p)\geq 6,$ we have $|e^{\Lambda_1}-1|=|\Gamma_1|<0.5,$ which yields $e^{|\Lambda_1|}\leq 1+|\Gamma_1|<1.5.$ Thus, 
\[
|\Gamma_1| := \left|1 - \frac{10^d}{\alpha^{n-m}}\right| < \frac{70}{\alpha^{n-p}}.
\] We have; 
\[
\left| d \log 10 - (n-m) \log \alpha \right| < \frac{106}{\alpha^{n-p}},
\] and hence 
\begin{equation*}
\left| \frac{\log \alpha}{\log 10} - \frac{d}{n-m} \right| < \frac{106}{\alpha^{n-p} \cdot (n-m) \log 10}. \end{equation*}
If 
\[
\frac{106}{(n-m) \alpha^{n-p} \log 10} < \frac{1}{2 (n-m)^2},
\]
then \(\frac{d}{n-m}\) is a convergent of the continued fraction expansion of the irrational number \(\frac{\log \alpha}{\log 10}\), say.
 \(\frac{p_i}{q_i}\). Since 
\(q_i \leq n-m \leq n-1 \leq 2.2\cdot 10^{27}k^7(\log k)^5,
\)
for each \(k\) we find an upper bound of indices \(i = i_0\) and therefore the maximum value of 
\(
a_{\max} := \max[a_0, a_1, a_2, \cdots, a_{i_0}]
\)
of the continued fraction of \(\log \alpha / \log 10\). By using a property of continued fractions, we write
\[
\frac{1}{(a_{\max} + 2)(n-m)^2} \leq \frac{1}{(a_i + 2)(n-m)^2} < \left| \frac{\log \alpha}{\log 10} - \frac{d}{n-m} \right| < \frac{106}{\alpha^{n-p} \cdot (n-m) \cdot \log 10}.
\] Thus, we get the inequality 
\[
\alpha^{n-p} < \frac{106 \cdot (a_{\max} + 2) \cdot 2.2\cdot 10^{27}k^7(\log k)^5}{\log 10},
\]
which gives an upper bound for \((n-p)\) and none of them is greater than \(181\). If 
\[
\frac{106}{(n-m) \alpha^{n-p} \log 10} \geq \frac{1}{2 (n-m)^2},
\] then we find a more strict bound for \((n-p)\) as  \[
\alpha^{n-p} < \frac{106 \cdot 2.2\cdot 10^{27}k^7(\log k)^5}{\log 10}.
\] We apply this procedure for each \(k \in [3,500]\) and we find that \((n-p) < 181\). From \eqref{eq:3.4}, we also find a bound for \(m\) as \(m < 183\). Since \(\max\{m, p\} > 500\), we may assume \(m<p\). Fix 
\[
\Lambda_2 := d \log 10 - (n-1) \log \alpha + \log \left( \frac{L_m^{(k)}}{f_k(\alpha)(2 \alpha - 1)(1 - \alpha^{p-n})} \right),
\]
for which we already established that \(\Gamma_2 \neq 0\) and hence \(\Lambda_2 \neq 0\). Assume that \((n-1) > 6\), this implies that
\begin{equation*}
\left| (n-1) \log \alpha - d \log 10 + \log \left( \frac{f_k(\alpha)(2 \alpha - 1)(1 - \alpha^{p-n})}{L_m^{(k)}} \right) \right| < \frac{28}{\alpha^{n-1}}. \end{equation*} For every \(k \in [3,500]\), \((n-p) \in [1, 181]\), \(m \in [1, 183]\), we apply the LLL-algorithm to estimate a lower bound for the smallest nonzero value of the linear form, where the integer coefficients are bounded in absolute value by 
\[
n < 2.2\cdot 10^{27}k^7(\log k)^5.
\] To this end, we employ the approximation lattice
\[
\mathcal{A}_1 = \begin{pmatrix} 
	1 & 0 & 0 \\ 
	0 & 1 & 0 \\ 
	\lfloor C\log \alpha\rfloor & \lfloor C\log (1/10)\rfloor & \left\lfloor C \log \left( \left( f_k(\alpha)(2\alpha - 1)(1 - \alpha^{p - n}) \right) / L_m^{(k)} \right) \right\rfloor
	
\end{pmatrix},
\] with $C := 5 \cdot 10^{150}$ and $y := (0,0,0)$. Applying Lemma \ref{red}, we obtain
\[
l(\mathcal{L},y) = |\Lambda| > c_1 = 10^{-52} \quad \text{and} \quad \delta = 3.4\cdot 10^{50}.
\]
Via Lemma \ref{blue}, we have that $S =5.1 \cdot 10^{100}$ and $T = 2.4 \cdot 10^{50}$. Since $\delta^2 \geq T^2 + S$, choosing $c_3 := 28$ and $c_4 := \log \alpha$, we get $n-1 \leq 343$ hence $n\leq 344$. To conclude this case, we conducted a computational search using SageMath, checking all values of \(L_n^{(k)}\) for \(k \in [3,500]\), \(0 \leq m \leq n - p + 4\), and \(k + 1 \leq n \leq n(k)\), taking into account inequality~\eqref{dbound} and ensuring that \eqref{eq:main} is satisfied. We found only the results stated in Theorem~\ref{thm1.1l}. We now proceed to examine the case \(k > 500\).\\
\subsubsection{The case $k>500$} If \(k > 500\), then 
\begin{equation}\label{a}
n < 2.2 \cdot 10^{27} k^7 (\log k)^5 < 2^{k/2}.
\end{equation}
We prove the following result. 

\begin{lemma}\label{p}
	Let \((p,m,n,k)\) be a solution to the Diophantine equation \eqref{eq:main} with \(n \geq 5\), \(k > 500\) and \(n \geq k+1\), then 
	\[
	k < 4.9 \cdot 10^{27} \quad \text{and} \quad n < 1.6 \cdot 10^{230}
	\]
	
\end{lemma}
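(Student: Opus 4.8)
The plan is to abandon the field $\mathbb{Q}(\alpha)$ altogether and reduce everything to multiplicative relations among the rational numbers $2,5,10$, so that Matveev's theorem (Theorem~\ref{thm1}) can be applied with base field $\mathbb{Q}$, i.e.\ $D=1$ and with all logarithmic heights equal to absolute constants. The decisive observation is that the \emph{polynomial} bound \eqref{finaln}, $n<2.2\cdot10^{27}k^{7}(\log k)^{5}$, forces $\log B=O(\log k)$ for every coefficient vector $B$ that occurs (the naive bound $n<2^{k/2}$ would only give $O(k)$ and be useless); consequently a majorant of size $2^{-k/2}$ on a well-chosen linear form yields an inequality of the shape $\tfrac{k}{2}\log 2=O((\log k)^{2})$, which bounds $k$. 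Since $k>500$, \eqref{a} gives $n<2^{k/2}$, so the sharp estimate \eqref{b} is in force; together with \eqref{3.6} it gives $L_j^{(k)}=3\cdot 2^{\,j-2}(1+\vartheta_j)$ with $|\vartheta_j|=O(2^{-k/2})$ for $j\in\{n,p\}$, while \eqref{3.1} yields $|\log(\alpha/2)|<2^{1-k}$, which is what lets me trade powers of $\alpha$ for powers of $2$ at a cost of $O(2^{-k/2})$.

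First I would convert $\Gamma_1$ of \eqref{eq:3.22}: writing $\alpha^{n-m}=2^{n-m}(\alpha/2)^{n-m}$ and using $(n-m)|\log(\alpha/2)|<2^{1-k/2}$, I obtain the rational linear form $\Lambda_1:=(n-m)\log 2-d\log 10$ with $|\Lambda_1|<140\,\alpha^{-(n-p)}+2^{1-k/2}$, which is nonzero since $2^{n-m}=10^{d}$ is impossible for $d\ge1$ (the right side is divisible by $5$). Applying Theorem~\ref{thm1} with $t=2$, $\eta_1=2$, $\eta_2=10$, $D=1$, and $B=\max\{n-m,d\}<2.2\cdot10^{27}k^{7}(\log k)^{5}$, and comparing with the upper bound, I reach
\[
\min\Bigl\{(n-p)\log\alpha,\ \tfrac{k}{2}\log 2\Bigr\}<C_1\bigl(1+\log B\bigr)=O(\log k).
\]
Hence either $k$ is already bounded by a constant, or $n-p<c_1\log k$; by \eqref{eq:3.4} the latter also forces $m<n-p+2<c_1\log k+2$, so in particular $m<k$ once $k$ is large.

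In this remaining case I return to $\Gamma_2$ of \eqref{gamma2}. Replacing only the transcendental factors $f_k(\alpha)(2\alpha-1)\alpha^{n-1}$ by $3\cdot 2^{\,n-2}$ (estimate \eqref{b}) and $1-\alpha^{p-n}$ by $1-2^{p-n}$ (correction $O((n-p)2^{-k})$), and using $L_m^{(k)}=3\cdot 2^{\,m-2}$ exactly for $2\le m\le k$ by \eqref{eq2.2} (the profiles $m\in\{0,1\}$ contribute only powers of $2$), I arrive at
\[
\Bigl|1-\frac{2^{\,m+d-p}\,5^{\,d}}{2^{\,n-p}-1}\Bigr|<c_2\,2^{-k/2}.
\]
Its left side is nonzero: $2^{\,m+d-p}5^{d}=2^{\,n-p}-1$ would force, by a valuation/parity comparison and the Bang--Zsygmondy obstruction (Lemma~\ref{bangzig}), the impossible equation $5^{d}=2^{\,n-p}-1$. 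Here the data $2,5,2^{n-p}-1$ are rational with heights $O(\log k)$ (since $h(2^{n-p}-1)=(n-p)\log2=O(\log k)$), and again $\log B=O(\log k)$, so Theorem~\ref{thm1} with $t=3$, $D=1$ gives a lower bound $-C_2(\log k)^{2}$, which against the majorant $2^{-k/2}$ produces $\tfrac{k}{2}\log 2<C_2(\log k)^{2}$.

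In both cases I obtain an inequality $k<\kappa(\log k)^{2}$ with explicit $\kappa$, and Lemma~\ref{lemma3.1} (with $e=2$) converts this into $k<4.9\cdot10^{27}$. Substituting back into \eqref{finaln} gives $n<2.2\cdot10^{27}(4.9\cdot10^{27})^{7}\bigl(\log(4.9\cdot10^{27})\bigr)^{5}<1.6\cdot10^{230}$, as claimed. I expect the main obstacle to be the disciplined propagation of the $2^{-k/2}$ errors through the two base changes---controlling $(\alpha/2)^{n-m}$, the difference $1-\alpha^{p-n}$ versus $1-2^{p-n}$, and the $e_k$ contributions---so that each rational form is provably nonzero with exactly the stated majorant; a secondary delicate point is tracking the numerical constants tightly enough to land at $4.9\cdot10^{27}$ rather than a larger value.
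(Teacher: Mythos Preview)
Your approach is essentially identical to the paper's: both pass to $\mathbb{Q}$ via the estimate \eqref{b}, set up the two-term form $\bigl|1-10^{d}/2^{\,n-m}\bigr|$ (the paper's $\Gamma_3$, your $\Lambda_1$) and then the three-term form built from $1-2^{\,p-n}$ (the paper's $\Gamma_4$; your $\bigl|1-2^{\,m+d-p}5^{d}/(2^{\,n-p}-1)\bigr|$ is literally $\Gamma_4$ rewritten), apply Matveev with $D=1$, and arrive at $k\ll(\log k)^{2}$, hence $k<4.9\cdot10^{27}$ via Lemma~\ref{lemma3.1} and then $n<1.6\cdot10^{230}$ via \eqref{finaln}. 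The nonvanishing arguments (divisibility by $5$, then the Bang obstruction of Lemma~\ref{bangzig}) also match.

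There is one genuine slip. In your second step you invoke \eqref{eq2.2} to write $L_m^{(k)}=3\cdot 2^{\,m-2}$ \emph{exactly}, which requires $m\le k$; you justify this by ``$m<c_1\log k+2$, so $m<k$ once $k$ is large''. But the constant $c_1$ coming out of Matveev here is of order $10^{10}$, so $c_1\log k+2<k$ fails on a huge initial interval (roughly $500<k\lesssim 10^{12}$), and the lemma must cover all $k>500$. The fix is the one the paper uses: approximate $L_m^{(k)}$ by $3\cdot 2^{\,m-2}$ via \eqref{b} as well (valid since $m<n<2^{k/2}$), so that all three $3$'s cancel and the resulting form $\bigl|1-10^{d}/\bigl(2^{\,n-m}(1-2^{\,p-n})\bigr)\bigr|$ carries only an $O(2^{-k/2})$ error with no restriction on $m$ versus $k$. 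With that replacement your argument goes through verbatim.
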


\begin{proof}
	Since \(k > 500\), then \eqref{a} holds and we can use the sharper estimate in \eqref{b}  together with \eqref{eq:3.5} and write \begin{equation*}
	\left| 3 \cdot 2^{m-2} \cdot 10^d + 3 \cdot 2^{p-2} - 3 \cdot 2^{n-2} \right| < 3 \cdot 2^{n-2} \cdot \frac{36}{2^{k/2}}.
	\end{equation*} Thus \begin{equation*}
	\left| 3 \cdot 2^{m-2} \cdot 10^d - 3 \cdot 2^{n-2} \right| < 3 \cdot 2^{n-2} \cdot \frac{36}{2^{k/2}} + \left| 3 \cdot 2^{p-2} \right|,
\end{equation*} and hence
\begin{equation*}
\left| 1 - \frac{10^d}{2^{n-m}} \right| 
< \frac{36}{2^{k/2}} + 2^{p-n} 
< \frac{1}{2^{\frac{k}{6} - 6}} + \frac{1}{2^{n-p}} 
< \frac{1}{2^{\min\{(k/2)-7,\; n - p - 1\}}}.
\end{equation*}	
	Therefore, 
\begin{equation}\label{c}
	\left| 1 - \frac{10^d}{2^{n-m}} \right| < \frac{1}{2^{\min\left\{ (k/2) - 7, n-p - 1 \right\}}}.
\end{equation}	
	Fix 
	\[
	\Gamma_3 := \left| 1 - \frac{10^d}{2^{n-m}} \right| < \frac{1}{2^{\min\left\{ (k/2) - 7, n-p - 1 \right\}}}.
	\]	Clearly \(\Gamma_3 \neq 0\) otherwise we would have \(2^{n-m} = 10^d\). This is a contradiction because the left hand side is divisible by 5 and yet the right hand side is not. We take the algebraic number field \(\mathbb{K} := \mathbb{Q}\) with \(D=1, t=2\), 
	\begin{align*}
		\eta_1 &:= 10, \quad \eta_2 := 2, \\
		b_1 &:= d, \quad b_2 := -(n - m).
	\end{align*} Since \(h(\eta_1) := \log 10\) and \(h(\eta_2) := \log 2\), we get $
	A_1 := \log 10, \quad A_2 := \log 2.$ Also $
	B := n - m+6.$ Applying Theorem \ref{thm1}, we get \begin{equation*}
	\log |\Gamma_3| > -1.4 \cdot 30^5 \cdot 2^{4.5} \cdot 1^2 \cdot (1 + \log 1)(1 + \log (n - m+6)) \cdot \log 10 \cdot \log 2.
	\end{equation*} Also 
	\begin{equation*}
	\log |\Gamma_3| < \log 2^{\min\left\{ (k/2) - 7, n-p - 1 \right\}},
	\end{equation*}
	thus \begin{equation*}
	\min\left\{(k/2) - 7, n-p - 1 \right\} \log 2 < 1.24 \cdot 10^9 (1 + \log (n - m+6)).
	\end{equation*} Since $n-m<n$, we get \begin{equation*}
	\min\left\{(k/2) - 7, n-p - 1 \right\} \log 2 < 3.8\cdot 10^{10}\log k,\end{equation*} where we have used the bound in Equation \eqref{finaln}. \\This yields two cases:
\begin{enumerate}[$(a)$]
\item If \(\min\left\{ k/2 - 7, n - p - 1 \right\} := (k/2) - 7\), then \((k/2) - 7 < 3.8\cdot 10^{10} \log k\), hence \(k < 7.7 \cdot 10^{10} \log k\), and by Lemma~\ref{lemma3.1}, \(k < 3.9 \cdot 10^{12}\).

\item If \(\min\left\{ (k/2) - 7, n - p - 1 \right\} := n - p - 1\), then \(n - p < 3.9 \cdot 10^{10} \log k\).
\end{enumerate}	We proceed as in \eqref{c}, beginning with \begin{equation*}
\left| 3 \cdot 2^{m-2} \cdot 10^d + 3 \cdot 2^{p-2} - 3 \cdot 2^{n-2} \right| < 3 \cdot 2^{n-2} \cdot \frac{36}{2^{(k/2)}},
\end{equation*} which implies
\begin{equation*}
\left| 3 \cdot 2^{n-2} (1 - 2^{p-n}) - 3 \cdot 2^{m-2} \cdot 10^d \right| < 3 \cdot 2^{n-2} \cdot \frac{36}{2^{(k/2)}},
\end{equation*} and consequently
\begin{equation*}
\left| (1 - 2^{p-n}) - 2^{m-n} \cdot 10^d \right| < \frac{36}{2^{(k/2)}}.
\end{equation*} This leads to
\begin{equation*}
\left| 1 - \frac{10^d}{2^{n-m} \cdot (1 - 2^{p-n})} \right| < \frac{36}{2^{(k/2)} \cdot (1 - 2^{p-n})} < \frac{72}{2^{(k/2)}},
\end{equation*}
where we have used the estimate
\begin{equation*}
\frac{1}{2^{(k/2)} \cdot (1 - 2^{p-n})} < \frac{1}{1 - \frac{1}{2}} = 2.
\end{equation*} Thus, we define
\begin{equation}\label{d}
|\Gamma_4| := \left| 1 - \frac{10^d}{2^{n-m} \cdot (1 - 2^{p-n})} \right| < \frac{72}{2^{(k/2)}}.
\end{equation} Exactly the same argument in subsection \ref{sub3.2} shows that \(\Gamma_4 \neq 0\). Fix \(\mathbb{K} := \mathbb{Q}\), we have \(D := 1, t := 3\),
\begin{align*}
	\eta_1 &:= 10, & \eta_2 &:= 2, & \eta_3 &:= (1 - 2^{p-n}), \\
	b_1 &:= d, & b_2 &:= -(n - m), & b_3 &:= -1.
\end{align*}
The logarithmic heights are given by 
\( h(\eta_1) := \log 10 \), 
\( h(\eta_2) := \log 2 \), and 
\( h(\eta_3) \leq 2.7 \cdot 10^{10} \log k \), 
where we have used the fact that 
\( h(\eta_3) \leq |p-n| \log 2 + \log 2 = (|p-n| + 1) \log 2 < (3.9\cdot10^{10}+1)\log 2<2.7\cdot 10^{10}\log k \).
Furthermore, we define 
\( A_1 := \log 10 \), 
\( A_2 := \log 2 \), 
\( A_3 := 2.7 \cdot 10^{10} \log k \), 
and 
\( B := (b - m+6) < 7(n-m) \). Applying theorem \ref{thm1} to \eqref{d}, we get \begin{equation*}
	\log |\Gamma_4| > -1.4 \cdot 30^6 \cdot 3^{4.5} \cdot 1^2 \cdot (1 + \log 1)(1 + \log {7(n-m)} ) \cdot \log 10 \cdot \log 2 \cdot 2.7 \cdot 10^{10} \log k.
\end{equation*} Also from \ref{d}	\begin{equation*} \log |\Gamma_4| < \log 72 - (k/2) \log 2.
\end{equation*} Comparing both bounds yields \((k/2) \log 2 - \log 72 < 1.4 \cdot 10^{23} (\log k)^2,\) thus 
\(k < 4.1 \cdot 10^{23} (\log k)^2.\) So by lemma \ref{lemma3.1}, we get  
\[k < 4.9 \cdot 10^{27},\]
and by \eqref{finaln} we get 
\[n < 1.6 \cdot 10^{230}.\] This completes the proof.
\end{proof}\noindent We now proceed to reduce the bounds obtained in lemma \ref{p}. To do this, we revisit \ref{c} and recall that \begin{equation*}
\Gamma_3 := \frac{10^d}{2^{n-m}} - 1.
\end{equation*} We already showed that \(\Gamma_3 \neq 0\), thus \(\Lambda_3 \neq 0\). Also since 
\(\min\{ (k/2) - 7, n - p - 1 \} > 2,\) 
\(|e^{\Lambda_3} - 1| = |\Gamma_3| < 0.5,\) 
which leads to 
\(e^{|\Gamma_3|} \leq 1 + |\Gamma_3| < 1.5.\)
Therefore \begin{equation*}
\left| d \log 10 - (n-m) \log 2 \right| < \frac{2}{2^{\min\left\{ (k/2) - 7, n-p - 1 \right\}}}, \end{equation*} and hence
\begin{equation*} \left| \frac{\log 2}{\log 10} - \frac{d}{n-m} \right| < \frac{2}{(n-m)\cdot 2^{\min\left\{(k/2) - 7, n-p - 1 \right\}}\log{10}}.
\end{equation*} First assume that \begin{equation*}
\frac{2}{(n-m)\cdot 2^{\min\left\{ (k/2) - 7, n-p - 1 \right\}}\log 10} < \frac{1}{2 (n-m)^2},\end{equation*}
then this means \(\frac{d}{n-m}\) is a convergent of continued fractions of \(\frac{\log 2}{\log 10}\), say \(\frac{p_i}{q_i}\). Since \(\gcd(p_i, q_i) = 1\), we deduce that $
q_i \leq n-m \leq n-1 < 1.6 \cdot 10^{230}.$ A quick calculation shows that \(i < 136\). Let $[a_0, a_1,a_2,a_3,a_4,a_5,a_6,a_7,a_8,a_9, \cdots] = [0,3,3,9,2,2,4,6,2,1,\cdots ]$ be the continued fraction expansion of \(\log 2 / \log 10\), then \(
\max \{ a_i \} = 5393 \quad \text{for} \quad i=0,1,2,\cdots, 136.
\) So we write \begin{equation*}
\frac{1}{5393 \cdot (n-m)^2} \leq \frac{1}{(a_i + 2)(n-m)^2} < \left| \frac{\log 2}{\log 10} - \frac{d}{n-m} \right| < \frac{2}{(n-m) \cdot 2^{\min((k/2) - 7, n-p - 1)} \cdot \log 10}.
\end{equation*}
Thus we have
\begin{equation*}
	2^{\min\{(k/2) - 7, n - p - 1\}} < \frac{2 \cdot 5393 \cdot 1.6 \cdot 10^{230}}{\log 10} < 7.5\cdot 10^{233},
\end{equation*}
that is $\min\{(k/2) - 7, \, n - p - 1 \} < 427.
$ On the other hand, the inequality
\begin{equation*}
	\frac{1}{2 (n - m)^2} \leq \frac{2}{2^{\min\left((k/2) - 7, \, n - p - 1 \right)} \cdot (n - m) \log 10}
\end{equation*}
implies
\begin{equation*}
	2^{\min\{(k/2)- 7, \, n - p - 1 \}} < \frac{4 (n - m)}{\log 10} < \frac{4 \cdot 1.6 \cdot 10^{230}}{\log 10} < 2.8 \cdot 10^{230}.
\end{equation*} So we see that
$	\min\{(k/2)- 7, \, n - p - 1 \} < 427
$
holds in this case too. We proceed in two ways. \begin{enumerate}[$(a)$]
	\item If $\min\left((k/2) - 7, \, n - p - 1 \right) = (k/2) - 7$, then $k < 869$.
	
	\item If $\min\left((k/2) - 7, \, n - p - 1 \right) = n - p - 1$, then $n - p < 428$. 
\end{enumerate} Applying the bound obtained on $k$ to inequality \eqref{a}, we get $n<1.2\cdot10^{52}.$ We revisit \eqref{d} and recall that \begin{equation*}
\Gamma_4 := \frac{10^d}{2^{n - m} \cdot (1 - 2^{p - n})} - 1.
\end{equation*} Since we have already shown that $\Gamma_4 \neq 0$, it follows that $\Lambda_4 \neq 0$. Moreover, since $k > 500$, we obtain the inequality $|e^{|\Lambda_4|} - 1| = |\Gamma_4| < 0.5.$
Hence, we arrive at
\begin{equation*}
\left| (n-p)\log(2) - d \log 10 + (n - m) \log 2 \right| < \frac{108}{2^{k/2}}.
\end{equation*} We now apply the LLL-algorithm, restricting the absolute values of the integer coefficients to be less than $1.6 \cdot 10^{230}$. To do this, we consider the lattice
\[
\mathcal{A}_2 = 
\begin{pmatrix} 
1 & 0 & 0 \\ 
0 & 1 & 0 \\ 
\lfloor C \log (2) \rfloor & \lfloor C \log (1/10) \rfloor & \lfloor C \log 2 \rfloor
\end{pmatrix},
\]
where we set $C := 4.1\cdot10^{690}$ and take $y := (0, 0, 0)$ as before. Let $\delta := 3.3\cdot10^{230}$, $S := 5.12\cdot10^{460}$, and $T := 2.4\cdot10^{230}$. We choose $c_3 := 108$ and $c_4 := \log2$. This yields the inequality $
(k/2) < 1538,$ and therefore $k <3076,$ hence from \eqref{finaln}, $n<2.0\cdot 10^{56}.$ We repeat the same procedure starting with this sub-subsection and find that $\min\left((k/2) - 7, \, n - p - 1 \right)<194.$Thus if $\min\left((k/2) - 7, \, n - p - 1 \right) =(k/2)-7$, then $k<402$, a contradiction. Also, if $\min\left((k/2) - 7, \, n - p - 1 \right)=n-p-1,$ then $n-p< 195$. We recall \[\Gamma_4 := \frac{10^d}{2^{n - m} \cdot (1 - 2^{p - n})} - 1.\] for which we already showed that $\Gamma_4\neq 0$ and hence we write \begin{align*}
	\left|d\frac{\log {10}}{\log {2}}-(n-m)-\frac{1-2^{p-n}}{\log{2}} \right|<\frac{108}{2^{(k/2)}\log 2}.
\end{align*}
\noindent Applying \ref{Lemma2.4}, we take $$\tau:=\frac{\log {10}}{\log 2}, \mu_{n-p}:=\frac{1-2^{p-n}}{\log 2}, A:=\frac{\log {108}}{\log 2}, B:=2.$$
Let $\tau=[a_0,a_1,\cdots ]=[3; 3, 9, 2, 2, 4, 6, 2, 2]$ be the continued fraction of $\tau$, Let $M:=2\cdot 10^{56}$ which is such that $M>n-1\geq n-m+6>d.$ With the help of SageMath, it is found that the convergent \begin{align*}
	\frac{p}{q}=\frac{p_{124}}{q_{124}}=\frac{59709183646229903017509733728189314625139620328694917340547}{17974255294124444596871803224395333592038752850416569230287},
\end{align*}
\noindent is such that $q=q_{124}>6M.$ Furthermore, it gives $\epsilon<0.49693$ and thus, \begin{align*}
	\frac{k}{2}\leq \frac{\log {(108/\log 2)q/\epsilon}}{\log 2}<213.
\end{align*} \noindent From this, we get $k<426$ which contradicts our initial assumption that $k > 500$. Therefore, we conclude that the Diophantine equation~\eqref{eq:main} has no positive integer solutions for $k > 500$, which completes the proof.
\section*{Conclusion}
In this paper, we have completely solved the Diophantine equation
$
L_n^{(k)} = L_m^{(k)} \cdot 10^d + L_p^{(k)}
$
involving concatenations of \(k\)-Lucas numbers for all \(k \geq 3\). Our results show that such equations admit only finitely many solutions, confirming the rarity of concatenation-type structures in generalized Lucas numbers. This work extends earlier results on classical Lucas numbers and opens new directions for similar investigations in other recurrence sequences such as $k$-Pell numbers.
\appendix

\section*{Address}
$ ^{1} $ Department of Mathematics, School of Physical Sciences, College of Natural Sciences, Makerere University, Kampala, Uganda

Email: \url{alexbt@cns.mak.ac.ug}

Email: \url{mahadi.ddamulira@mak.ac.ug}

Email: \url{kaggwaprosper58@gmail.com}
\end{document}